\newcommand{\Spec}{\operatorname{Spec}}
\newcommand{\dd}{\mathrm{d}}
\newcommand{\id}{\operatorname{Id}}
\newcommand{\Vol}{\operatorname{Vol}}
\newcommand{\vol}{\operatorname{vol}}
\newcommand{\Ric}{\operatorname{Ric}}
\newcommand{\scal}{\operatorname{scal}}
\renewcommand{\div}{\operatorname{div}}
\newcommand{\Ss}{\mathds S}
\newcommand{\Hr}{\mathds H}
\newcommand{\R}{\mathds R}
\newcommand{\C}{\mathds C}
\newcommand{\SO}{\mathsf{SO}}
\newcommand{\SU}{\mathsf{SU}}
\newcommand{\U}{\mathsf{U}}
\newcommand{\Sp}{\mathsf{Sp}}
\newcommand{\Spin}{\mathsf{Spin}}
\newcommand{\G}{\mathsf{G}}
\newcommand{\K}{\mathsf{K}}
\renewcommand{\H}{\mathsf H}
\newcommand{\Ad}{\operatorname{Ad}}
\newcommand{\g}{\mathrm g}
\newcommand{\h}{\mathrm h}
\newcommand{\gr}{\g_\mathrm{round}}
\newtheorem{theorem}{Theorem}[]
\newtheorem{proposition}[theorem]{Proposition}
\newtheorem{mainthm}{\sc Theorem}
\newtheorem{maincor}[mainthm]{\sc Corollary}
\theoremstyle{definition}
\newtheorem{definition}[theorem]{Definition}
\theoremstyle{remark}
\newtheorem{remark}[theorem]{Remark}
\title[Nonuniqueness of conformal metrics with constant $Q$-curvature]{Nonuniqueness of conformal metrics with constant $Q$-curvature}
\subjclass[2010]{53A30, 53C21, 58J55, 58E11, 35J91}
\author[R. G. Bettiol]{Renato G. Bettiol}
\address{City University of New York (Lehman College) \newline
\indent Department of Mathematics  \newline
\indent 250 Bedford Park~Blvd W\newline
\indent Bronx, NY, 10468, USA }
\email{r.bettiol@lehman.cuny.edu}
\author[P. Piccione]{Paolo Piccione}
\address{
Universidade de S\~ao Paulo \newline
\indent Departamento de Matem\'atica \newline
\indent Rua do Mat\~ao, 1010 \newline
\indent S\~ao Paulo, SP, 05508-090, Brazil}
\email{piccione@ime.usp.br}
\author[Y. Sire]{Yannick Sire}
\address{
Johns Hopkins University\newline
\indent Krieger Hall \newline
\indent 3400 N.~Charles St. \newline
\indent Baltimore, MD, 21218, USA}
\email{sire@math.jhu.edu}
\numberwithin{equation}{section}
\numberwithin{theorem}{section}
\date{\today}
\begin{document}

\begin{abstract}
We establish several nonuniqueness results for the problem of finding complete conformal metrics with constant (fourth-order) $Q$-curvature on compact and noncompact manifolds of dimension $\geq5$. Infinitely many branches of metrics with constant $Q$-curvature, but without constant scalar curvature, are found to bifurcate from Berger metrics on spheres and complex projective spaces.
These provide examples of nonisometric metrics with the same constant negative $Q$-curvature in a conformal class with negative Yamabe invariant, echoing the absence of a Maximum Principle. We also discover infinitely many complete metrics with constant $Q$-curvature conformal to  $\Ss^m\times \R^d$, $m\geq4$, $d\geq1$, and $\Ss^m\times \Hr^d$, $2\leq d\leq m-3$; which give infinitely many solutions to the \emph{singular} constant $Q$-curvature problem on round spheres $\Ss^n$ blowing up along a round subsphere $\Ss^k$, for all $0\leq k<(n-4)/2$.
\end{abstract}

\maketitle


\section{Introduction}

The study of fourth-order conformal invariants of a Riemannian manifold $(M,\g)$ of dimension $n\geq3$ naturally leads to the definition of $Q$-curvature:
\begin{equation*}
Q_\g=\frac{1}{2(n-1)}\Delta_\g\scal_\g-\frac{2}{(n-2)^2}\big\|\Ric_\g\big\|^2+\frac{n^3-4n^2+16n-16}{8(n-1)^2(n-2)^2}\scal_\g^2,
\end{equation*}
where $\Delta_\g u=-\div_\g(\nabla u)$ is the (nonnegative) Laplacian operator. Following the seminal works of Branson~\cite{branson} and Paneitz~\cite{paneitz}, there has been great interest in understanding geometric and analytic properties of $Q_\g$. This remains a very active pursuit, as evidenced by developments even just over the last 3 years \cite{GurHanLin2016,gm,hang-yang-2015,HangYang2016,lin1,lin2,lin3}, see \cite{hang-yang-survey} for a survey.

Analogously to the Yamabe problem, a central question is whether $Q_\g$ can be made constant by using conformal deformations of $(M,\g)$, which corresponds to a fourth-order elliptic PDE \eqref{eq:constQcurvequation} on the conformal factor. Despite substantial progress regarding the existence of solutions, to our knowledge, the issue of uniqueness (or lack thereof) has only been inspected in a small number of geometric settings. Grunau, Ould Ahmedou, and Reichel~\cite{grunau} established the existence of a continuum of radially symmetric solutions on hyperbolic space $\Hr^n$, $n\geq5$, with ODE techniques. Using an involved perturbation argument and Mazzeo's microlocal analysis of elliptic edge operators, Li~\cite{gangli-thesis,gangli} obtained a continuum of solutions conformal to a perturbation of Poincar\'e-Einstein metrics. 
A blowing up sequence of solutions (with one bubble) 
was built by Wei and Zhao~\cite{wei-zhao} on spheres $\Ss^n$, $n\geq25$, with a certain non conformally flat metric, implying the existence of infinitely many solutions in that conformal class; see also \cite{hebey-robert,qing-raske}.

Nevertheless, in consonance with the plethora of results for the Yamabe problem, the scope of nonuniqueness in the constant $Q$-curvature problem ought to be much richer, both geometrically and topologically. The purpose of the present paper is to confirm this, by exhibiting extensive nonuniqueness phenomena on a wide class of compact and noncompact manifolds, using variational bifurcation theory and other topological methods. We restrict ourselves to manifolds of dimension $n\geq5$, as the low-dimensional cases $n=3$ and especially $n=4$ require a separate discussion.


We begin by establishing a criterion (Theorem~\ref{thm:abstrbifrisult}) to detect bifurcations along families $\g_t$ of metrics with constant $Q$-curvature and constant scalar curvature that admit \emph{horizontally Einstein} Riemannian submersions with minimal fibers. This criterion is well-suited to metrics $\g_t$ that form a \emph{canonical variation}, i.e., are obtained by rescaling the vertical directions of a submersion by a factor $0<t<+\infty$. We show that bifurcations for the constant $Q$-curvature problem are ubiquitous among such families near the degenerate limits $t=0$ and $t=+\infty$, see Propositions~\ref{prop:bifurcation0} and \ref{prop:bifurcationInfty}. A convenient framework to generate examples is provided by homogeneous fibrations, 
see Section~\ref{sec:homex}. 
Although our results yield many more examples (see e.g.~Proposition~\ref{prop:HomExamples}), to simplify the exposition, we now only state our findings on 
the so-called \emph{Hopf bundles}, where $\g_t$ are often referred to as \emph{Berger metrics}:

\begin{mainthm}\label{mainthm:A}
There exists an infinite sequence of bifurcating branches of metrics on $M$ with constant $Q$-curvature, but nonconstant scalar curvature, that issue from the Berger metrics $\g_t$ as $t\searrow0$ and/or $t\nearrow+\infty$, according to the table below.
\end{mainthm}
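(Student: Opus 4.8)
The plan is to derive Theorem~\ref{mainthm:A} as a direct application of the abstract bifurcation criterion (Theorem~\ref{thm:abstrbifrisult}) together with the asymptotic bifurcation results (Propositions~\ref{prop:bifurcation0} and~\ref{prop:bifurcationInfty}) to the specific case of Hopf bundles. The key observation is that the Hopf fibrations $\Ss^1\hookrightarrow\Ss^{2n+1}\to\C P^n$, $\Ss^3\hookrightarrow\Ss^{4n+3}\to\Hr P^n$, and $\Ss^7\hookrightarrow\Ss^{15}\to\Ss^8$ are homogeneous Riemannian submersions with totally geodesic fibers, and the base metrics (Fubini--Study, quaternionic, or round) are Einstein; hence the canonical variations $\g_t$ are horizontally Einstein submersions with minimal (indeed totally geodesic) fibers, so the hypotheses of Theorem~\ref{thm:abstrbifrisult} are met for every $t\in(0,+\infty)$.

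First I would recall, from Section~\ref{sec:homex}, the explicit formulas for the scalar curvature $\scal_{\g_t}$ and the relevant spectral data of the canonical variation along each Hopf bundle: the eigenvalues of the Laplacian $\Delta_{\g_t}$ acting on functions, computed via the representation theory of the isometry group (i.e., a Peter--Weyl decomposition), together with the constant value $Q_{\g_t}$. These are rational functions of $t$ (and of $n$), so the fourth-order linearized operator governing the bifurcation — essentially $P_{\g_t}$ linearized at the constant solution, which on each eigenspace becomes a quadratic polynomial in the Laplace eigenvalue — has a kernel precisely when a certain algebraic equation in $t$ and the eigenvalue is satisfied. Next I would invoke Propositions~\ref{prop:bifurcation0} and~\ref{prop:bifurcationInfty}, which guarantee that as $t\searrow 0$ or $t\nearrow+\infty$ the Morse index (or a suitable equivariant count) of the constant solution changes infinitely often, producing an infinite sequence of degeneracy instants $t_k$ accumulating at the endpoint; at each such $t_k$ the relevant topological invariant jumps, yielding a genuine bifurcating branch. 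The fact that the bifurcating metrics do not have constant scalar curvature follows because $\scal_{\g_t}$ is already constant along the trivial branch, so any branch of constant-$Q$ metrics transverse to it must leave the constant-scalar-curvature locus — this is exactly the mechanism isolated in the abstract criterion.

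The remaining work is bookkeeping: for each of the Hopf bundles one must determine \emph{in which regime} ($t\searrow 0$, $t\nearrow+\infty$, or both) the index actually accumulates — this depends on the sign of the leading asymptotics of $\scal_{\g_t}$ and $Q_{\g_t}$ and on the dimension $n$, and the answer is what populates the table. So I would carefully tabulate, bundle by bundle, the asymptotic behavior of the linearized operator's spectrum and check the hypotheses of Proposition~\ref{prop:bifurcation0} versus~\ref{prop:bifurcationInfty}, also recording the dimensional constraints (e.g.\ $n\geq 5$, or whatever thresholds arise from requiring the $Q$-curvature operator to have the right sign or the fibration to be nontrivial) under which each entry is valid.

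The main obstacle I anticipate is not conceptual but computational: one must ensure that the algebraic equation detecting kernel elements has infinitely many admissible solutions $t_k$ in the relevant one-sided neighborhood of the endpoint \emph{and} that at each $t_k$ the jump in the bifurcation invariant is nonzero (no accidental cancellation from eigenvalue multiplicities or from two degeneracy instants coalescing). Verifying this requires a careful asymptotic analysis of the quartic-in-eigenvalue symbol of the Paneitz-type operator as $t\to 0$ or $t\to\infty$, since in these limits the metric $\g_t$ collapses (fibers shrink) or its fibers blow up, and the Laplace spectrum degenerates in a controlled but bundle-dependent way. Once the relevant propositions from the previous sections are in hand, however, this reduces to checking finitely many explicit inequalities for each line of the table.
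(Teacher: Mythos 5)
Your overall strategy is indeed the paper's: the Hopf-bundle canonical variations are horizontally Einstein Riemannian submersions with totally geodesic fibers, and the table is populated by feeding the data of Table~\ref{tab:hopfbundles-numbers} into Proposition~\ref{prop:bifurcation0} (through Proposition~\ref{prop:HomExamples}) and Proposition~\ref{prop:bifurcationInfty}. Two corrections, though, one of which is a genuine gap in your plan as written. The minor one: the spectral input is the Laplacian of the \emph{base} $(B,\g_B)$, not of $(M,\g_t)$, and no Peter--Weyl computation is needed --- the criterion only uses that $\Spec(\Delta_{\g_B})$ is discrete and unbounded, so that for arbitrarily large eigenvalues $\lambda$ one can solve $\tfrac12\lambda^2+\alpha_t\lambda+\beta_t=0$ for some $t_\lambda$ near the endpoint; the reduction of $P_{\g_t}$ to a polynomial in $\Delta_{\g_B}$ holds only on basic functions. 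Similarly, \emph{nonconstant scalar curvature} is not a transversality statement about leaving the constant-scalar locus (the Yamabe problem could in principle bifurcate too); it is obtained by checking $\lambda\neq\scal_{\g_{t_*}}/(n-1)$, which rules out Yamabe bifurcation by \cite{LPZ12b}, and the propositions verify this asymptotically, so deferring to them is acceptable.

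The substantive gap: if you only verify the hypotheses of Proposition~\ref{prop:bifurcationInfty}, in particular the sufficient condition \eqref{eq:etazetaineq} with $\eta/\zeta=(n-l)/l$, you obtain bifurcation as $t\nearrow+\infty$ only for $q\geq10$ in case (i), $q\geq3$ in (ii), and $q\geq4$ in (iii) --- strictly weaker than the table's thresholds $q\geq6$, $q\geq2$, $q\geq3$. The paper closes this by treating the borderline cases directly: it computes $\alpha_t,\beta_t$ from the explicit Berger-metric formulas for $\scal_{\g_t}$ and $Q_{\g_t}$ in Appendix~\ref{app:berger} and reruns the asymptotic argument of the proof of Proposition~\ref{prop:bifurcationInfty}, since \eqref{eq:etazetaineq} is sufficient but not necessary. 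Your ``careful asymptotic analysis'' gestures at this, but as stated your plan would only prove a weaker table. Finally, the \emph{no} entry for (i) as $t\searrow0$ needs its own justification (not just silence): for $l=1$ one has $\Lambda_F=0$, so $\alpha_t^2-2\beta_t$ stays bounded as $t\searrow0$ and at most finitely many bifurcation instants can occur near $0$, as the paper observes in the remark following Proposition~\ref{prop:bifurcationInfty}.
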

\begin{center}
\vspace{-0.2cm}
\begin{table}[ht]
\begin{tabular}{|c|c|c|}
\hline
\begin{tabular}{c}
Hopf bundle \\[2pt]
$F\longrightarrow M\longrightarrow B$
\end{tabular}\rule[-1.2ex]{0pt}{0pt} \rule{0pt}{2.5ex} & \begin{tabular}{c}
Infinitely many \\ bifurcations \\ as $t\searrow0$
\end{tabular} & \begin{tabular}{c}
Infinitely many\rule[-1.2ex]{0pt}{0pt} \rule{0pt}{2.5ex} \\ bifurcations \\ as $t\nearrow+\infty$
\end{tabular} \\
\hline \noalign{\smallskip} \hline 
$\Ss^1\to \Ss^{2q+1}\to \C P^q$\rule[-1.2ex]{0pt}{0pt} \rule{0pt}{2.5ex} &  no & if $q\geq 6$\\
$\Ss^3\to\Ss^{4q+3}\to \Hr P^q$\rule[-1.2ex]{0pt}{0pt} \rule{0pt}{2.5ex} &  if $q\geq1$ & if $q\geq 2$\\
$\C P^1\to \C P^{2q+1}\to \Hr P^q$\rule[-1.2ex]{0pt}{0pt} \rule{0pt}{2.5ex}   & if $q\geq2$ & if $q\geq 3$\\
$\Ss^7\to \Ss^{15}\to \Ss^8(1/2)$\rule[-1.2ex]{0pt}{0pt} \rule{0pt}{2.5ex} & yes & yes\\
\hline
\end{tabular}
\end{table}
\end{center}
\vspace{-0.4cm}

Some remarkable facts about the constant $Q$-curvature problem on closed manifolds can be observed with the above result. First, one obtains global examples, e.g., on odd-dimensional spheres, of metrics that have constant $Q$-curvature but do not have constant scalar curvature.
Second, nonuniqueness of constant $Q$-curvature may take place on conformal classes with \emph{negative} Yamabe invariant; as is the case of conformal classes of Berger metrics $\g_t$ for sufficiently large $t$, since $\scal_{\g_t}\searrow-\infty$ as $t\nearrow+\infty$ if $\dim F\geq2$. Third, some of these Berger metrics $\g_t$, with large $t$, simultaneously have $\scal_{\g_t}<0$ and $Q_{\g_t}<0$, see Appendix~\ref{app:berger}; and nonisometric conformal metrics with the same constant (negative) $Q$-curvature can exist even in this regime. For comparison, recall that a metric with constant negative scalar curvature is \emph{unique} in its conformal class by the Maximum Principle. Clearly, this argument is not applicable to the constant $Q$-curvature problem due to its fourth-order nature.

Leaving the realm of closed manifolds, the constant $Q$-curvature problem can also be posed on manifolds with boundary or noncompact manifolds. On the latter, the natural boundary condition is \emph{completeness} of the metric. For instance, this is trivially satisfied by metrics that descend to a compact quotient $M/\Gamma$; we call these \emph{periodic} solutions with period $\Gamma$. Our second main result exploits the abundance of discrete cocompact groups $\Gamma$ on symmetric spaces to find infinitely many periodic solutions \emph{with different periods}, reflecting a truly noncompact phenomenon:

\begin{mainthm}\label{mainthm:B}
Let $(C,\g)$ be a closed manifold with constant scalar curvature and $(N,\h)$ be a simply-connected symmetric space of noncompact or Euclidean type, such that $(C\times N,\g\oplus\h)$ has dimension $\geq5$, $\scal\geq0$ and $Q\geq0$ but $Q\not\equiv0$. Then $(C\times N,\g\oplus\h)$ has infinitely many nonhomothetic periodic conformal metrics with constant positive $Q$-curvature.
\end{mainthm}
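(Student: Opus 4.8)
The plan is to produce the infinitely many solutions by passing to larger and larger cocompact quotients and solving the compact constant $Q$-curvature problem on each one, using a Yamabe-type variational argument, and then arguing that the resulting solutions cannot coincide because their periods differ.

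Let me think carefully about the structure.

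We have $(C, \g)$ closed with constant scalar curvature, and $(N, \h)$ a simply-connected symmetric space of noncompact or Euclidean type. So $N$ is a homogeneous space, $N = G/K$ where $G$ is a connected Lie group (semisimple of noncompact type, or Euclidean/abelian, or a product). Such spaces have abundant cocompact lattices — actually this requires care. For symmetric spaces of noncompact type, Borel's theorem gives cocompact lattices. For Euclidean space $\R^d$, $\Z^d$ and its finite-index subgroups are cocompact. For products, take products. Moreover, one can find a decreasing sequence (or a sequence with increasing covolume / increasing injectivity radius) of such lattices: for $\R^d$ take $k\Z^d$ for $k \to \infty$; for noncompact type, by residual finiteness of lattices pass to deeper and deeper finite-index normal subgroups $\Gamma_1 \supset \Gamma_2 \supset \cdots$.

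So for each $j$ we get a closed manifold $N_j = N/\Gamma_j$, and the product $M_j = C \times N_j$ is a closed manifold covered by $C \times N$, with the quotient metric $\g \oplus \h_j$. Since $\g \oplus \h$ has $\scal \geq 0$, $Q \geq 0$, $Q \not\equiv 0$, the same holds on each $M_j$ (these are local conditions, preserved by covering). Now I want to solve the constant $Q$-curvature problem in the conformal class of $\g \oplus \h_j$ on $M_j$. There should be a known existence theorem: under the hypotheses $\scal \geq 0$ (so the Paneitz operator... hmm, actually the right positivity is about the Paneitz operator being positive and $Q$ having a sign) — the conditions "$\scal \geq 0$, $Q \geq 0$, $Q \not\equiv 0$" are precisely (a version of) the hypotheses under which Gursky–Malchiodi, or Hang–Yang, prove existence of a conformal metric with constant positive $Q$-curvature on a closed manifold via the variational method (minimizing the $Q$-curvature functional). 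Actually I recall: if the Paneitz operator $P_\g$ is positive and $Q_\g \geq 0$, $Q_\g \not\equiv 0$ (or $\ker P_\g = $ constants only, etc.) then there's a minimizer. And the condition $\scal \geq 0$ together with $Q \geq 0$ implies $P_\g > 0$ — this is a result of Gursky–Malchiodi / Hang–Yang. So there's a solution $\g_j = u_j^{4/(n-4)}(\g \oplus \h_j)$ with $Q_{\g_j} = $ const $> 0$ on $M_j$.

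Now the key point: does $\g_j$ lift to a solution on $C \times N$? Yes — pull back via the covering $C \times N \to M_j$. We get a complete metric $\tilde\g_j$ on $C \times N$, conformal to $\g \oplus \h$, with constant positive $Q$-curvature, and it is periodic with period (containing) $\{1\} \times \Gamma_j$.

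The main obstacle — and the crux of the argument — is to show these solutions give genuinely distinct (nonhomothetic) metrics, i.e., that we really get infinitely many. The danger: the minimizer $\g_j$ on $M_j$ could itself be a pullback from a smaller quotient $M_i$, $i < j$, in which case we'd just be recycling. To rule this out, I would use an invariant that detects the "true period," e.g., the total volume or the $Q$-curvature value combined with a counting/concentration argument; or more robustly, I would argue by contradiction: suppose only finitely many nonhomothetic solutions on $C \times N$ arise this way. Then infinitely many of the $\g_j$ are pairwise homothetic as metrics on $C \times N$, hence (being periodic with period $\Gamma_j$) would all be $\Gamma$-periodic for some fixed $\Gamma$, forcing the $\Gamma_j$ to be eventually commensurable in a way contradicting $\bigcap \Gamma_j = \{1\}$ with strictly decreasing covolume. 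Alternatively — and this is cleaner — compare the Yamabe-type energy / total $Q$-curvature: on $M_j$ the minimizer has energy equal to the infimum of the normalized total $Q$-curvature functional, and these infima behave predictably under covers (the infimum over an $m$-fold cover is at most $m^{4/n}$ times... actually the constant $Q$-curvature value is a conformal invariant of a sort) — one shows the constant $Q$-curvature value $Q_{\g_j}$ (normalized, say to unit volume) is bounded, while if $\g_j$ were a pullback from $M_i$ its volume would be an integer multiple, so the normalized curvature would change; tracking this forces new solutions infinitely often.

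So, concretely, here is the order of steps I would carry out.

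First, establish the existence of a decreasing tower of cocompact lattices $\Gamma_1 \supsetneq \Gamma_2 \supsetneq \cdots$ in $\Iso(N)$ (or a suitable transitive subgroup) with $\bigcap_j \Gamma_j = \{1\}$ and $\vol(N/\Gamma_j) \to \infty$; cite Borel for the noncompact type factors, explicit lattices for the Euclidean factor, and residual finiteness to get the descending chain. Second, for each $j$, form $M_j = C \times (N/\Gamma_j)$ with the induced metric, note it is closed of dimension $\geq 5$ with $\scal \geq 0$, $Q \geq 0$, $Q \not\equiv 0$; invoke the existence theorem for constant $Q$-curvature (Gursky–Malchiodi / Hang–Yang) to get $\g_j$ conformal to the induced metric with $Q_{\g_j} \equiv c_j > 0$. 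Third, pull $\g_j$ back to $\tilde\g_j$ on $C \times N$: a complete metric, conformal to $\g \oplus \h$, constant positive $Q$-curvature, periodic with period $\{1\}\times\Gamma_j$. Fourth — the heart of the proof — show that $\{\tilde\g_j\}$ contains infinitely many pairwise nonhomothetic metrics, by an argument tracking volume/energy under covers together with the strict growth of $\vol(N/\Gamma_j)$, as sketched above; a metric on $C\times N$ periodic with two incommensurable-enough periods is impossible unless it is (the pullback of) something with a larger symmetry, which a dimension/energy count excludes. I expect Step four to be the one requiring the most care — making precise why the minimizers on deeper and deeper covers genuinely differ, rather than all descending to one fixed quotient — and it is where I would spend the bulk of the write-up, likely by arguing that if all $\tilde\g_j$ were homothetic to finitely many metrics then almost all would descend to a common finite cover, contradicting $\vol(N/\Gamma_j)\to\infty$ together with periodicity. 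The remaining steps are standard: the lattice theory is classical, the covering/pullback compatibility of $Q$-curvature is immediate from its local definition, and the existence theorem on closed manifolds is quoted from the literature.
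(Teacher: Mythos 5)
Your overall architecture (tower of compact quotients of $C\times N$ via Borel plus residual finiteness, solve the constant $Q$-curvature problem on each closed quotient, pull back, then show the solutions are genuinely new) is the same as the paper's, but the step you yourself flag as the heart of the proof — distinguishing the solutions — has a genuine gap, and your first proposed fix is actually wrong. If the solution produced on a deep quotient $M_j$ happens to be the pullback of the solution on $M_1$, there is no contradiction with periodicity: a metric invariant under the large lattice $\Gamma_1$ is automatically invariant under every $\Gamma_j\subset\Gamma_1$, and all the $\Gamma_j$ in such a tower are commensurable by construction, so "infinitely many $\tilde\g_j$ homothetic $\Rightarrow$ common period $\Rightarrow$ contradiction with covolume growth'' does not close. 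Your second sketch (track a normalized energy under covers) is the right idea, but as stated it is missing the one ingredient that makes it work: a bound, \emph{uniform over all the covers}, against which the behavior of pullbacks can be played. The paper gets this from Hang--Yang: the invariant $\Theta_4$ of \eqref{eq:defTheta4}, which under $Y>0$ and almost positive $Q$ is attained by a smooth maximizer giving a constant $Q$-curvature metric, together with the \emph{reverse} Aubin-type inequality $\Theta_4(M_k,\g_k)\geq\Theta_4(\Ss^n,\gr)$ of Proposition~\ref{prop:hangyang}. One then computes that for the pullback $\h_k$ of a fixed solution, $\overline\Theta(M_k,\h_k)\sim \ell_k^{-4/n}\to0$, so for $k$ large the pullback cannot be the maximizer; the maximizer in $[\h_{k_0}]$ is therefore a new, nonhomothetic solution, and iterating yields infinitely many. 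Merely observing that "the normalized curvature would change if $\g_j$ were a pullback'' does not rule out that the solution you constructed on $M_j$ \emph{is} that pullback; you need the sphere bound to force the existence of a better competitor.

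Two secondary points. First, the existence theorem you want to quote requires $Y>0$ strictly (Hang--Yang; Gursky--Hang--Lin moreover needs $n\geq6$, whereas $n=5$, e.g.\ $\Ss^4\times\R$, is allowed here), while the hypotheses only give $\scal\geq0$, hence $Y\geq0$, on the base quotient; the paper passes to the nontrivial finite covers and invokes Aubin's observation (Akutagawa--Neves) to get $Y(M_k,\g_k)>0$ for $k\geq1$. Second, your preferred functional (minimizing the normalized total $Q$-curvature, i.e.\ $Y_4$ or $Y_4^+$) suffers from the known defect that minimizers need not be positive, so it is not the variational framework that delivers solutions under the stated hypotheses; this is exactly why the paper works with $\Theta_4$ instead.
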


An immediate consequence of the above is the existence of infinitely many complete metrics with constant $Q$-curvature conformal to the standard product metrics on $\Ss^m\times \R^d$ for all $m\geq4$, $d\geq1$, and on $\Ss^m\times \Hr^d$ for all $2\leq d\leq m-3$. 
Using the stereographic projection, it is easy to see that $\Ss^{n-1}\times\R$ is conformally equivalent to both $\Ss^{n}\setminus\{\pm p\}$ and $\R^n\setminus\{0\}$, endowed with their (incomplete) constant curvature metrics. Thus, there are also infinitely many solutions to the constant $Q$-curvature problem on $\Ss^{n}\setminus\{\pm p\}=\Ss^n\setminus\Ss^0$ and $\R^n\setminus\{0\}$, for all $n\geq5$.

A higher codimension version of this argument shows that, for all $1\leq k<n$, $\Ss^{n-k-1}\times\Hr^{k+1}$ is conformally equivalent to $\Ss^n\setminus\Ss^k$ endowed with the (incomplete) round metric, see~\cite{bps-jdg}.
Therefore, another consequence of Theorem~\ref{mainthm:B} is that:

\begin{maincor}\label{maincor:C}
There are infinitely many complete metrics with constant positive $Q$-curvature on $\Ss^n\setminus \Ss^k$, $0\leq k<\frac{n-4}2$, conformal to the round~metric.
\end{maincor}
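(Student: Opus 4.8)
The plan is to deduce this directly from Theorem~\ref{mainthm:B} by choosing the right product manifold. The key geometric input, quoted from~\cite{bps-jdg}, is that for every $1\le k<n$ the round sphere with a round subsphere removed, $\Ss^n\setminus\Ss^k$ equipped with the (incomplete) round metric, is conformally equivalent to the Riemannian product $\Ss^{n-k-1}\times\Hr^{k+1}$, where $\Hr^{k+1}$ carries its hyperbolic metric of constant curvature $-1$ and $\Ss^{n-k-1}$ its round metric. Hence a complete constant $Q$-curvature metric conformal to the product metric on $\Ss^{n-k-1}\times\Hr^{k+1}$ is the same thing as a complete constant $Q$-curvature metric conformal to the round metric on $\Ss^n\setminus\Ss^k$. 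So it suffices to verify that the hypotheses of Theorem~\ref{mainthm:B} hold for $C=\Ss^{n-k-1}$ (with its round metric, which has constant scalar curvature) and $N=\Hr^{k+1}$ (a simply connected symmetric space of noncompact type), in the stated range of $k$.

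The first step is the dimension count: $\dim(C\times N)=(n-k-1)+(k+1)=n$, and we need $n\ge5$, which is implied by $k<\frac{n-4}{2}$ together with $k\ge0$ (indeed $n>2k+4\ge4$, so $n\ge5$). The second step is the curvature conditions $\scal\ge0$, $Q\ge0$, $Q\not\equiv0$ for the product $\g_{\mathrm{round}}\oplus\g_{\mathrm{hyp}}$ on $\Ss^{n-k-1}\times\Hr^{k+1}$. Since both factors are Einstein, the product is a constant-curvature-type model whose scalar curvature and $Q$-curvature are constants expressible via the formula in the Introduction; the second step is to compute these constants and show $\scal\ge0$ and $Q>0$. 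Writing $m=n-k-1$ and $d=k+1$, so that $m\ge4$ (from $k<\frac{n-4}{2}$, i.e. $m=n-k-1>k+3\ge d+2$, giving $m\ge d+2\ge3$; one checks $m\ge4$ in the relevant subrange) and $2\le d\le m-3$ — which is exactly the range under which Theorem~\ref{mainthm:B}'s consequence for $\Ss^m\times\Hr^d$ was asserted in the text — the scalar curvature of $\g_{\mathrm{round}}$ on $\Ss^m$ is $m(m-1)>0$ and that of $\g_{\mathrm{hyp}}$ on $\Hr^d$ is $-d(d-1)$, so $\scal=m(m-1)-d(d-1)$, which is $\ge0$ precisely when $m\ge d$; since $d\le m-3<m$ this holds. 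Then $\Delta\scal=0$, $\|\Ric\|^2=(m-1)^2m+(d-1)^2d$, and plugging into the $Q$-curvature formula with $\dim=m+d$ gives an explicit number; the third step is the elementary (if slightly tedious) verification that this number is strictly positive in the range $2\le d\le m-3$, and in particular $Q\not\equiv0$ — this can be done by bounding the $\scal^2$ term against the $\|\Ric\|^2$ term, using $\scal\ge3$ (since $\scal=m(m-1)-d(d-1)$ and $d\le m-3$ forces a definite positive gap) and the coefficient $\frac{n^3-4n^2+16n-16}{8(n-1)^2(n-2)^2}>0$ for $n=m+d\ge5$.

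Once these conditions are checked, Theorem~\ref{mainthm:B} produces infinitely many nonhomothetic periodic conformal metrics with constant positive $Q$-curvature on $\Ss^m\times\Hr^d=\Ss^{n-k-1}\times\Hr^{k+1}$, each complete (being periodic, i.e., descending to a compact quotient) and conformal to the product metric; transporting through the conformal equivalence with $\Ss^n\setminus\Ss^k$ and noting that the conformal factor relating the round metric to the product metric is fixed, these become infinitely many complete constant positive $Q$-curvature metrics conformal to the round metric on $\Ss^n\setminus\Ss^k$. Distinctness (indeed non-homothety) is inherited from the non-homothety in Theorem~\ref{mainthm:B}. The main obstacle is the positivity check in step three: one must confirm that the delicate sign of the constant $Q$-curvature of the model product is favorable throughout the asserted range $0\le k<\frac{n-4}{2}$, since the $Q$-curvature formula has competing terms of opposite sign and the upper bound on $k$ is presumably exactly what is needed to keep $Q>0$ (equivalently, to keep the Yamabe-type/positivity hypothesis of Theorem~\ref{mainthm:B} satisfied). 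Everything else is a direct substitution into Theorem~\ref{mainthm:B} and the conformal-equivalence dictionary of~\cite{bps-jdg}.
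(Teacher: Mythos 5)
Your proposal follows exactly the paper's route: Corollary~\ref{maincor:C} is deduced from Theorem~\ref{mainthm:B} via the conformal equivalence $\Ss^n\setminus\Ss^k\cong\Ss^{n-k-1}\times\Hr^{k+1}$ of \cite{bps-jdg}, taking $C=\Ss^{n-k-1}$ and $N=\Hr^{k+1}$ and checking $\scal\ge0$, $Q>0$ for the product model, so on the main line there is nothing to add. Two small points, though. First, as written your argument does not cover the endpoint $k=0$, which the statement includes: the equivalence you quote from \cite{bps-jdg} is stated for $1\le k<n$, and your verification is carried out in the range $2\le d\le m-3$, i.e.\ $k\ge1$. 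The paper handles $k=0$ by the stereographic identification $\Ss^n\setminus\{\pm p\}\cong\Ss^{n-1}\times\R$, applying Theorem~\ref{mainthm:B} with $N=\R$ (Euclidean type, which the theorem allows); there $\scal=(n-1)(n-2)>0$ and $Q=\tfrac{n^2(n-4)}{8}>0$, so the same conclusion goes through (equivalently, note $\Hr^1$ is isometric to $\R$). Second, the positivity check you defer as ``slightly tedious'' needs no bounding argument: the paper records the constants $\scal=(n-1)(n-2k-2)$ and $Q=\tfrac n8\bigl(n^2-4n(k+1)+4k(k+2)\bigr)=\tfrac n8(n-2k-4)(n-2k)$ for the product model, so $Q>0$ is immediate precisely when $k<\tfrac{n-4}{2}$ (or $k>\tfrac n2$), and $\scal>0$ when $k<\tfrac{n-2}{2}$; this also confirms your suspicion that the upper bound on $k$ is exactly what keeps the hypothesis of Theorem~\ref{mainthm:B} satisfied. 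With these two adjustments the proposal is complete and coincides with the paper's proof.
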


It would be interesting to determine whether $k<\frac{n-4}2$ is the \emph{maximal} range of dimensions in which nonuniqueness occurs. Notably, it follows from a result of Chang, Hang, and Yang~\cite[Thm~1.2]{CHY} that if periodic solutions to the constant $Q$-curvature problem exist on $\Ss^n\setminus \Lambda$ with $Q>0$ and $\scal>0$, then $\dim\Lambda<\frac{n-4}2$. Recall that, by the above conformal equivalence $\Ss^n\setminus\Ss^k \cong \Ss^{n-k-1}\times\Hr^{k+1}$, $1\leq k<n$, the pullback of the standard product metric gives a (trivial) solution with
\begin{equation*}
Q=\frac{n}{8}\left(n^2-4n(k+1)+4k(k+2)\right) \;\;\text{ and } \;\;\scal=(n-1)(n-2k-2),
\end{equation*}
and note that $Q>0$ if and only if $k<\frac{n-4}{2}$ or $k>\frac{n}{2}$, while $\scal>0$ if and only if $k<\frac{n-2}{2}$.
Furthermore, $k<\frac{n-4}2$ is precisely the range of dimensions $k$ of limit sets of Kleinian groups associated to locally conformally flat closed manifolds for which Qing and Raske~\cite{qing-raske} establish $C^\infty$-compactness of the space of metrics with constant positive $Q$-curvature and positive scalar curvature. 

\subsection{About the proofs}
Although the methods used to prove Theorems~\ref{mainthm:A} and~\ref{mainthm:B} are adapted from nonuniqueness results for the Yamabe problem, the technical hurdles to implement them are substantially more challenging in the (fourth-order) constant $Q$-curvature problem.

The Yamabe parallel to Theorem~\ref{mainthm:A} is the main result in~\cite{bp-calcvar}, whose proof uses classical variational bifurcation criteria~\cite{smoller-wasserman,kielhofer} that rely on computing the Morse index of a metric $\g$ as a critical point of the total scalar curvature functional, also called Hilbert-Einstein functional. This can be accomplished by analyzing the spectrum of the Laplacian~$\Delta_\g$. Meanwhile, computing the Morse index of a critical point of the total $Q$-curvature functional~\eqref{eq:deftotQcurvfunct} requires  analyzing the spectrum of the Paneitz operator
\begin{equation*}
P_\g\psi=\Delta_\g^2 \psi +\tfrac{4}{n-2}\div_\g(\Ric_\g(\nabla\psi,e_i)e_i)-\tfrac{n^2-4n+8}{2(n-1)(n-2)}\div_\g(\scal_\g\nabla\psi)+\tfrac{n-4}{2}Q_\g \psi.
\end{equation*}
Inspired by a simplification due to Otoba and Petean~\cite{OtobaPetean1} of the techniques in \cite{bp-calcvar}, we overcome this difficulty by finding an appropriate geometric framework that, while not imposing too many topological restrictions, reduces $P_\g$ on basic functions to a quadratic polynomial on~$\Delta_\g$. This facilitates the required spectral analysis, and leads to Theorem~\ref{thm:abstrbifrisult}, of which Theorem~\ref{mainthm:A} is a special instance.

Theorem~\ref{mainthm:B} and Corollary~\ref{maincor:C} are, in turn, the $Q$-curvature doppelg\"angers of \cite[Thm~1.1, Cor.~1.2]{frankenstein}, proved by playing the Aubin inequality against the volume growth along an infinite tower of finite-sheeted coverings. While the volume estimate side of the argument is the same, the existence result that produces new solutions at each step is a recent breakthrough of Hang and Yang~\cite{HangYang2016}, involving a \emph{reverse Aubin-type inequality} for a new conformal invariant, see Proposition~\ref{prop:hangyang}. 

\subsection{Organization of the paper}
In Section~\ref{sec:prelim}, we provide an overview of the constant $Q$-curvature problem, including its variational aspects and related conformal invariants. The main bifurcation criterion (Theorem~\ref{thm:abstrbifrisult}) is proved in Section~\ref{sec:bifurcation}, together with its consequences for degenerating canonical variations (Propositions~\ref{prop:bifurcation0} and \ref{prop:bifurcationInfty}). Section~\ref{sec:homex} contains the Lie theoretic apparatus used to produce examples of homogeneous fibrations to which the above bifurcation criteria apply, along with the proof of Theorem~\ref{mainthm:A}. The proof of Theorem~\ref{mainthm:B} is given in Section~\ref{sec:noncompact}. Finally, explicit formulae for the $Q$-curvature of Berger metrics are provided in Appendix~\ref{app:berger}.

\subsection{Acknowledgements}
We would like to thank the referees for their extraordinary attention to detail in the revision of our paper, which led to several improvements (mainly in Propositions \ref{prop:bifurcation0} and \ref{prop:bifurcationInfty}) and for alerting us to reference~\cite{CHY}.

\section{\texorpdfstring{Preliminaries on the constant $Q$-curvature problem}{Preliminaries on the constant Q-curvature problem}}
\label{sec:prelim}

In this section, we recall the variational formulation of the constant $Q$-curvature problem, including first and second variations, as well as some Yamabe-type invariants, an Aubin-type inequality and an existence result of Hang and Yang~\cite{HangYang2016}.

\subsection{\texorpdfstring{Paneitz operator and $Q$-curvature}{Paneitz operator and Q-curvature}}
Let $(M,\g)$ be a Riemannian manifold of dimension $n\geq5$. Recall that the $Q$-curvature of the metric $\g$ is defined as
\begin{equation*}
Q_\g=\tfrac{1}{2(n-1)}\Delta_\g\scal_\g-\tfrac{2}{(n-2)^2}\big\|\Ric_\g\big\|^2+\tfrac{n^3-4n^2+16n-16}{8(n-1)^2(n-2)^2}\scal_\g^2,
\end{equation*}
where $\Delta_\g u=-\div_\g(\nabla u)$ is the (nonnegative) Laplace operator on $(M,\g)$. The Paneitz operator $P_\g$ is defined using a local $\g$-orthonormal frame $(e_i)_{i=1}^n$, as
\begin{equation*}
P_\g\psi=\Delta_\g^2 \psi +\tfrac{4}{n-2}\div_\g(\Ric_\g(\nabla\psi,e_i)e_i)-\tfrac{n^2-4n+8}{2(n-1)(n-2)}\div_\g(\scal_\g\nabla\psi)+\tfrac{n-4}{2}Q_\g \psi.
\end{equation*}
Fix a background metric $\g_0$ in $M$ and denote by $[\g_0]$ its conformal class. 
Writing conformal metrics $\g\in[\g_0]$ as $\g=u^{\frac4{n-4}}\,\g_0$, where $u\colon M\to\R$, $u>0$, the Paneitz operator satisfies the covariance property that for any $\psi\colon M\to\R$,
\begin{equation*}
P_{\g}\psi=u^{-\frac{n+4}{n-4}}P_{\g_0}(u\,\psi).
\end{equation*}
Thus, $Q_{\g}=\frac2{n-4}P_{\g}(1)=\frac2{n-4}\,u^{-\frac{n+4}{n-4}}\, P_{\g_0}(u)$, and the following expression holds for the $Q$-curvature in terms of the Paneitz operator:
\begin{equation}\label{eq:totalQcurvpaneitz}
P_{\g_0} u=\tfrac{n-4}2\,Q_\g\, u^{\frac{n+4}{n-4}}.
\end{equation}
Therefore, the constant $Q$-curvature equation for the metric $\g=u^\frac{4}{n-4}\,\g_0$ reads:
\begin{equation}\label{eq:constQcurvequation}
\phantom{\qquad\lambda\in\R.}
P_{\g_0}u=\lambda\, u^\frac{n+4}{n-4},\qquad\lambda\in\R.
\end{equation}
In particular, it follows by elliptic regularity that constant $Q$-curvature metrics (in a smooth conformal class) are smooth.

\subsection{Variational setup}\label{sub:varsetup}
For the remainder of this section, suppose that $M$ is closed. Consider the \emph{(normalized) total $Q$-curvature functional}
\begin{equation}\label{eq:deftotQcurvfunct}
\mathcal Q\colon [\g_0]\to\R,\qquad \mathcal Q(\g)=\Vol(M,\g)^\frac{4-n}{n}\int_M Q_\g \vol_\g.
\end{equation}
Note that $\mathcal Q$ is invariant under homotheties, i.e., $\mathcal Q(\mu\,\g)=\mathcal Q(\g)$ for all $\mu>0$.
Using \eqref{eq:totalQcurvpaneitz}, it is easy to see that $\g=u^\frac{4}{n-4}\,\g_0$ satisfies:
\[\tfrac{n-4}2\mathcal Q(\g)=\left(\int_Mu^\frac{2n}{n-4}\,\vol_{\g_0}\right)^\frac{4-n}n\int_M u\,P_{\g_0}u\, \vol_{\g_0}=\frac{E_{\g_0}(u)}{\ \Vert u\Vert_{L^{\frac{2n}{n-4}}(M,\vol_{\g_0})}^2},\]
where $E_{\g_0}$ is the quadratic functional associated to the Paneitz operator $P_{\g_0}$,
\[E_{\g_0}(u)=\int_M u\,P_{\g_0}u\, \vol_{\g_0}.\]
The first variation of the functional $\mathcal Q$ can be computed as follows \cite{gm}:
\begin{equation}\label{eq:derivativeQ}
\dd\mathcal Q(\g)\colon T_\g[\g_0]\cong C^\infty(M)\to\R, \quad \dd\mathcal Q(\g)\phi=\tfrac{n-4}{2}\int_M (Q_\g-\overline Q_\g)\phi \vol_\g,
\end{equation}
where $\overline Q_\g$ is the mean value of $Q_\g$. Thus, $\g\in[\g_0]$ is a critical point of $\mathcal Q$ if and only if it has constant $Q$-curvature.

\begin{remark}\label{rem:constrvarprob}
By scale-invariance, constant $Q$-curvature metrics can also be characterized as critical points of the functional $E_{\g_0}$ in $[\g_0]$ subject to the constraint
\begin{equation*}\label{eq:constraint}
\Vert u\Vert_{L^{\frac{2n}{n-4}}(M,\vol_{\g_0})}=1.
\end{equation*} 
It is easy to recover the constant $Q$-curvature equation \eqref{eq:constQcurvequation} as the Euler--Lagrange equation of this constrained variational problem, and compute the value $Q_\g$ of the $Q$-curvature of $\g=u^{\frac4{n-4}}\,\g_0$ in terms of the Lagrange multiplier associated to the critical point $u$.
\end{remark}

If $\g$ has constant $Q$-curvature, the second variation $\dd^2\mathcal Q(\g)$ is represented by the $4$th order \emph{Jacobi operator} obtained by linearizing \eqref{eq:constQcurvequation}, namely:
\begin{equation}\label{eq:jacobi}
J_\g\psi = \tfrac12 P_\g\psi-\tfrac{n+4}{4}Q_\g \psi.
\end{equation}
This is a Fredholm operator $J_\g\colon C^{j+4,\alpha}(M)\to C^{j,\alpha}(M)$ for all $j\ge0$, which is symmetric with respect to the $L^2$-inner product.

\subsection{Yamabe-type invariants}
Recall that the \emph{Yamabe invariant} of $(M,\g_0)$ is:
\begin{equation}\label{eq:defYamabeinvariant}
Y(M,\g_0)=\inf_{u\in C^\infty(M)\setminus\{0\}}\frac{\int_M u\,L_{\g_0}u\, \vol_{\g_0}}{\ \ \Vert u\Vert^2_{L^{\frac{2n}{n-2}}(M,\vol_{\g_0})}},
\end{equation}
where $L_{\g_0}=4\,\tfrac{n-1}{n-2}\Delta_{\g_0}+\scal_{\g_0}$ is the conformal Laplacian.

Analogous conformal invariants have been defined for the Paneitz operator and $Q$-curvature.
First, in analogy with the Yamabe invariant of a conformal class:
\begin{equation}\label{eq:defY4}
Y_4(M,\g_0)=\inf_{u\in C^\infty(M)\setminus\{0\}}\frac{E_{\g_0}(u)}{\ \ \Vert u\Vert^2_{L^{\frac{2n}{n-4}}(M,\vol_{\g_0})}}.
\end{equation}
It is well-known that the infimum in \eqref{eq:defYamabeinvariant} is always attained at some positive function $u>0$, and the corresponding conformal metric $\g=u^{\frac4{n-2}}\g_0$ has constant scalar curvature. However, unlike the second order case, if a minimizer $u$ exists for the right hand side of \eqref{eq:defY4}, it need not be positive and hence there may be no conformal metric associated to it. Thus, it is natural to also define:
\begin{equation}\label{eq:defY4+}
Y_4^+(M,\g_0)=\inf_{\stackrel{u\in C^\infty(M)}{u>0}}\frac{E_\g(u)}{\Vert u\Vert^2_{L^{\frac{2n}{n-4}}(M,\vol_{\g_0})}}=\tfrac{n-4}2\inf_{\g\in[\g_0]}\mathcal Q(\g).
\end{equation}
Clearly, $Y_4^+(M,\g_0)\ge Y_4(M,\g_0)$, and these invariants coincide in some special cases. For instance, if $n=\dim M\ge6$ and there exists $\g\in[\g_0]$, with $\scal_\g>0$ and $Q_\g>0$,\footnote{%
If $\dim M\ge6$, the existence of $\g\in[\g_0]$ with with $\scal_\g>0$ and $Q_\g>0$ is proved in \cite{GurHanLin2016} to be equivalent to $Y(M,\g_0)>0$ and $P_{\g_0}>0$.} then $Y_4(M,\g_0)=Y_4^+(M,\g_0)$, and the (positive) infimum in \eqref{eq:defY4} is attained by a positive function $u$ such that $u^{\frac4{n-4}}\g_0$ has positive constant $Q$-curvature, and everywhere positive scalar curvature~\cite{GurHanLin2016}.

Lastly, suppose $Y(M,\g_0)>0$ and $Q_{\g_0}$ is \emph{almost positive}, that is, $Q_{\g_0}\ge0$ everywhere and $Q_{\g_0}>0$ at some point. Although it is not known whether this implies $Y_4(M,\g_0)>0$, in this situation $\ker P_{\g_0}=\{0\}$, and the Green's function $G_{P_{\g_0}}$ is everywhere positive on $M\times M$. The inverse of $P_{\g_0}$ is the integral operator
\[G_{P_{\g_0}}f(p)=\int_MG_{P_{\g_0}}(p,q)f(q)\,\vol_{\g_0}(q).\]
A new conformal invariant was introduced by Hang and Yang~\cite{HangYang2016} in this context:
\begin{equation}\label{eq:defTheta4}
\Theta_4(M,\g_0)=\sup_{f\in L^{\frac{2n}{n+4}}\setminus\{0\}}
\frac{\int_M f\,G_{P_{\g_0}}f\, \vol_{\g_0}}{\Vert f\Vert^2_{L^{\frac{2n}{n+4}}(M,\vol_{\g_0})}}.
\end{equation}
In some sense, $\Theta_4(M,\g_0)$ is comparable with the reciprocal $1/Y_4^+(M,\g_0)$.
The advantages of considering this quantity are that if a maximizer $f$ for \eqref{eq:defTheta4} exists, then $f$ is smooth, does not change sign, and the conformal metric $f^{\frac4{n+4}}\,\g_0$ has constant $Q$-curvature.
Moreover, $\Theta_4(M,\g_0)$ can be used to prove the following existence result and \emph{reverse} Aubin-type inequality~\cite[Thm.~1.4, Lem.~2.1]{HangYang2016}. 

\begin{proposition}\label{prop:hangyang}
If $(M,\g_0)$ is a closed Riemannian manifold of dimension $n\ge5$, with $Y(M,\g_0)>0$, and $Q_{\g_0}$ almost positive, then
\begin{equation}\label{eq:newTheta4}
\Theta_4(M,\g_0)=\tfrac2{n-4}\,\sup_{\g\in[\g_0]} \frac{\int_MQ_{\g}\,\vol_\g}{\Vert Q_\g\Vert^2_{L^{\frac{2n}{n+4}}(M,\vol_\g)}}.
\end{equation}
The supremum in \eqref{eq:defTheta4} is attained at some smooth function $f\in C^\infty(M)$, and the conformal metric $f^{\frac4{n+4}}\,\g_0$ has constant $Q$-curvature.
Moreover,
\begin{equation}\label{eq:AubinTheta4}
\Theta_4(M,\g_0)\geq \Theta_4(\Ss^n,\gr),
\end{equation}
with equality if and only if $(M,\g_0)$ is conformally equivalent to $(\Ss^n,\gr)$.
\end{proposition}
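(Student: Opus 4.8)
The plan is to transfer the problem to the conformal class via the Paneitz Green's operator $G_{P_{\g_0}}=P_{\g_0}^{-1}$ and then run a concentration--compactness argument in the ``reverse'' direction, in the spirit of the variational resolution of the Yamabe problem. The hypotheses $Y(M,\g_0)>0$ and $Q_{\g_0}$ almost positive enter only through the facts recalled above, namely $\ker P_{\g_0}=\{0\}$ and $G_{P_{\g_0}}>0$ on $M\times M$; combined with the boundedness of $G_{P_{\g_0}}\colon L^{\frac{2n}{n+4}}\to L^{\frac{2n}{n-4}}$ (elliptic estimates and the critical embedding $W^{4,\frac{2n}{n+4}}\hookrightarrow L^{\frac{2n}{n-4}}$) and Hölder duality, they ensure $0<\Theta_4(M,\g_0)<\infty$. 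For the identity \eqref{eq:newTheta4} I would argue ``$\ge$'' by feeding into \eqref{eq:defTheta4}, for any $\g=u^{\frac4{n-4}}\g_0\in[\g_0]$ with $Q_\g\not\equiv0$, the test function $f:=P_{\g_0}u=\tfrac{n-4}2\,Q_\g\,u^{\frac{n+4}{n-4}}$: since $G_{P_{\g_0}}f=u$ and $\vol_\g=u^{\frac{2n}{n-4}}\vol_{\g_0}$, a direct computation gives $\int_M fG_{P_{\g_0}}f\,\vol_{\g_0}=\tfrac{n-4}2\int_M Q_\g\,\vol_\g$ and $\|f\|_{L^{\frac{2n}{n+4}}(M,\vol_{\g_0})}^2=\big(\tfrac{n-4}2\big)^2\|Q_\g\|_{L^{\frac{2n}{n+4}}(M,\vol_\g)}^2$, so the quotient in \eqref{eq:defTheta4} at $f$ is $\tfrac2{n-4}$ times the conformal quotient at $\g$; then take suprema. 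For ``$\le$'', positivity of the Green kernel gives $\int_M fG_{P_{\g_0}}f\le\int_M|f|G_{P_{\g_0}}|f|$ with $\|f\|=\||f|\|$, so one may restrict to $f\ge0$, in which case $u:=G_{P_{\g_0}}f>0$ defines a metric $\g:=u^{\frac4{n-4}}\g_0\in[\g_0]$ and the same identity run backwards closes the argument.

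For the existence of a maximizer, normalize a maximizing sequence by $\|f_k\|_{L^{\frac{2n}{n+4}}(M,\vol_{\g_0})}=1$ and extract a weak limit $f_\infty$. Compactness can fail only through bubbles concentrating at points of $M$, as in P.-L.\ Lions' concentration--compactness for the critical embedding $W^{4,\frac{2n}{n+4}}\hookrightarrow L^{\frac{2n}{n-4}}$; since $(M,\g_0)$ is asymptotically Euclidean near every point, each bubble contributes at most $\Theta_4(\Ss^n,\gr)$ to the quotient. Using that $\tfrac{n+4}{n}>1$, so that $\|\cdot\|_{L^{2n/(n+4)}}^2$ is strictly superadditive under disjoint supports, a split between a nonzero $f_\infty$ and a bubble is strictly suboptimal whenever $\Theta_4(M,\g_0)>\Theta_4(\Ss^n,\gr)$; hence in that regime no mass escapes and $f_\infty$ is a maximizer, and replacing it by $|f_\infty|$ we take $f_\infty\ge0$, $f_\infty\not\equiv0$. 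The Euler--Lagrange equation of \eqref{eq:defTheta4} is $G_{P_{\g_0}}f_\infty=\lambda\, f_\infty^{\frac{n-4}{n+4}}$ with $\lambda>0$; with $u:=G_{P_{\g_0}}f_\infty>0$ it becomes $P_{\g_0}u=\lambda^{-\frac{n+4}{n-4}}u^{\frac{n+4}{n-4}}$, so by \eqref{eq:constQcurvequation} and \eqref{eq:totalQcurvpaneitz} the metric $\g=u^{\frac4{n-4}}\g_0$ has constant $Q$-curvature $\tfrac2{n-4}\lambda^{-\frac{n+4}{n-4}}$. Standard elliptic regularity for this critical equation gives $u\in C^\infty(M)$, whence $f_\infty=\lambda^{-\frac{n+4}{n-4}}u^{\frac{n+4}{n-4}}$ is smooth and positive. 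When $(M,\g_0)$ is conformal to $(\Ss^n,\gr)$, the explicit standard bubbles realize the supremum, so a smooth positive maximizer exists in that case too.

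It remains to prove the reverse Aubin inequality \eqref{eq:AubinTheta4} together with its rigidity clause, which I expect to be the main obstacle. The bare inequality $\Theta_4(M,\g_0)\ge\Theta_4(\Ss^n,\gr)$ follows by inserting into \eqref{eq:defTheta4} cut-off rescalings of the extremals of the sharp fourth-order Sobolev inequality on $\R^n$, transplanted to a small geodesic ball read off in conformal normal coordinates, where $P_{\g_0}$ differs from $\Delta^2$ only by lower-order terms with controlled coefficients: the quotient converges to $\Theta_4(\Ss^n,\gr)$, giving ``$\ge$''. The \emph{strict} inequality when $(M,\g_0)$ is not conformal to the round sphere is the fourth-order analogue of the Aubin--Schoen resolution of the Yamabe problem: one obtains it either from a refined expansion of this quotient in which the Weyl tensor of $\g_0$ produces a favorable lower-order correction (for $n$ large with $(M,\g_0)$ not locally conformally flat), or from a positive-mass-type theorem for the Paneitz operator applied to the expansion of $G_{P_{\g_0}}$ near its pole (in the locally conformally flat or low-dimensional cases) --- and it is exactly here that $Y(M,\g_0)>0$ and $Q_{\g_0}$ almost positive are indispensable, both for the positivity of $G_{P_{\g_0}}$ used throughout and for the positive-mass statement. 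Finally, the rigidity direction ``$\Theta_4(M,\g_0)=\Theta_4(\Ss^n,\gr)\Rightarrow(M,\g_0)$ conformal to $(\Ss^n,\gr)$'' is obtained by tracing the equality case through this analysis (equality forces the Weyl obstruction and the mass to vanish, and the rigidity in the positive-mass theorem identifies $(M,\g_0)$ with the round sphere), while the converse implication amounts to computing $\Theta_4(\Ss^n,\gr)$ from its explicit extremals.
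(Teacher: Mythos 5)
First, note that the paper does not prove this proposition at all: it is quoted from Hang--Yang \cite{HangYang2016} (their Thm.~1.4 and Lem.~2.1), so you are attempting to reconstruct a deep external theorem rather than a proof contained in the paper. Your first paragraph, establishing \eqref{eq:newTheta4} by testing \eqref{eq:defTheta4} with $f=P_{\g_0}u$ and, conversely, by setting $u=G_{P_{\g_0}}f$ for $f\ge0$ using positivity of the Green kernel, is correct and is essentially the content of Hang--Yang's Lemma 2.1 (up to the minor point that for the ``$\le$'' direction $u=G_{P_{\g_0}}f$ is only in $W^{4,\frac{2n}{n+4}}$ when $f\in L^{\frac{2n}{n+4}}$, so one needs a density argument before speaking of the conformal metric $u^{\frac4{n-4}}\g_0$). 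Your second paragraph is the standard concentration--compactness scheme for a critical supremum problem and is fine in outline, but observe that it is logically contingent on the \emph{strict} inequality $\Theta_4(M,\g_0)>\Theta_4(\Ss^n,\gr)$ for $(M,\g_0)$ not conformal to the round sphere; everything therefore hinges on your third paragraph.

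That third paragraph contains a genuine gap: it does not prove the reverse Aubin inequality \eqref{eq:AubinTheta4} or its rigidity, it only names the tools one would hope to use. Concretely, you invoke (i) a refined bubble expansion in which ``the Weyl tensor produces a favorable lower-order correction'' and (ii) ``a positive-mass-type theorem for the Paneitz operator,'' with neither established nor made precise. For (i), the sign question cannot be borrowed from the Yamabe problem: since $\Theta_4$ is a supremum and the inequality is reversed relative to Aubin's, the correction terms in the expansion of $G_{P_{\g_0}}$ must push the quotient \emph{above} $\Theta_4(\Ss^n,\gr)$, and whether the Weyl-type and mass-type terms have that sign is exactly what must be computed; asserting it is not an argument. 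For (ii), a positive-mass statement for the Paneitz Green's function under precisely the hypotheses $Y(M,\g_0)>0$ and $Q_{\g_0}$ almost positive, valid for all $n\ge5$ with no local conformal flatness restriction, is not an off-the-shelf fact one may quote as standard: obtaining it (using positivity of $G_{P_{\g_0}}$ and the strong maximum principle of Gursky--Malchiodi \cite{gm}), and deducing \eqref{eq:AubinTheta4} together with its equality case, is the principal achievement of \cite{HangYang2016}. As written, your proposal assumes the crux of the proposition; the part you do prove (the identity \eqref{eq:newTheta4} and the compactness framework) is the comparatively soft reduction, and for the purposes of this paper the correct move is simply to cite \cite{HangYang2016}, as the authors do.
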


\section{Bifurcation criteria using Riemannian submersions}
\label{sec:bifurcation}

We now define a notion of \emph{bifurcation} for families of metrics with constant $Q$-curvature, and establish sufficient conditions for this phenomena to take place on the total space of Riemannian submersions, as its fibers are collapsed or dilated.

\begin{definition}\label{def:bifurcation}
Let $M$ be a closed manifold of dimension $n\geq5$, and let $\g_t$, $t\in[a,b]$, be a $1$-parameter family of Riemannian metrics on $M$ with constant $Q$-curvature.
We say that $t_*\in(a,b)$ is a \emph{bifurcation instant} for the family $\g_t$ if there exists a sequence $t_k$ in $[a,b]$ and a sequence of \emph{nonconstant} smooth positive functions $u_k\colon M\to\mathds R$ such that:
\begin{enumerate}[(i)]
\item $\lim\limits_{k\to\infty}t_k=t_*$,
\item the conformal metric $u_k^{\frac4{n-4}}\,\g_{t_k}$ has constant $Q$-curvature for all $k$,
\item $\lim\limits_{k\to\infty}u_k=1$ in the Sobolev space $W^{2,2}(M)$. 
\end{enumerate}
The collection of metrics $u_k^{\frac4{n-4}}\,\g_{t_k}$ is called a \emph{bifurcating branch} for the family $\g_t$.
\end{definition}

\subsection{Branch regularity}
Ellipticity of the constant $Q$-curvature problem can be used to improve the convergence of bifurcating solutions $u_k$ in Definition~\ref{def:bifurcation}:

\begin{proposition}
Assume that $[a,b]\ni t\mapsto\g_t$ is continuous in the $C^s$-topology, with $s\ge4$, and let $\widetilde\g_k=u_k^{\frac4{n-4}}\,\g_{t_k}$ be a bifurcating branch for the family $\g_t$. Then:
\begin{enumerate}[\rm (i)]
\item $\lim\limits_{k\to\infty} Q_{\widetilde\g_k}=Q_{\g_{t_*}}$,
\item $\lim\limits_{k\to\infty}u_k=1$  in the Whitney $C^\infty$-topology.
\end{enumerate}
In particular, $\widetilde\g_k$ tends to $\g_{t_*}$ in the $C^s$-topology, and its scalar curvature function tends to $\scal_{\g_{t_*}}$ in the $C^{s-2}$-topology.
\end{proposition}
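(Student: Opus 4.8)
The plan is to bootstrap the weak $W^{2,2}$-convergence hypothesized in Definition~\ref{def:bifurcation} up to smooth convergence, using elliptic regularity for the Paneitz operator together with the constant $Q$-curvature equation~\eqref{eq:constQcurvequation}. First I would record the constant $Q$-curvature values: write $Q_{\widetilde\g_k}=\lambda_k$, so that $P_{\g_{t_k}}u_k=\tfrac{n-4}2\lambda_k u_k^{\frac{n+4}{n-4}}$. Testing this equation against $u_k$ and using $u_k\to1$ in $W^{2,2}$ (so in particular $\int u_k^{\frac{2n}{n-4}}\to\Vol(M,\g_{t_*})$, by Sobolev embedding in dimension $n\le$ some range — more carefully, one uses that $\frac{2n}{n-4}$ is the critical exponent and $W^{2,2}\hookrightarrow L^{\frac{2n}{n-4}}$ is continuous, so $u_k\to1$ strongly in $L^{\frac{2n}{n-4}}$), and $\int u_k P_{\g_{t_k}}u_k \to \int 1\cdot P_{\g_{t_*}}1 = \tfrac{n-4}2 Q_{\g_{t_*}}\Vol(M,\g_{t_*})$ by the $C^s$-continuity of $t\mapsto\g_t$ ($s\ge4$ is exactly what is needed for the coefficients of $P_{\g_t}$ to vary continuously), one deduces $\lambda_k\to Q_{\g_{t_*}}$. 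This is the content of (i), modulo the promotion to uniform convergence which comes at the end. The one subtlety is that a priori we only know $\int u_k^{\frac{2n}{n-4}}$ is bounded; but this is precisely a bounded quantity, and combined with $\Vert u_k\Vert_{W^{2,2}}$ bounded it is what feeds the regularity machine.

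The core of the argument is an iteration scheme. Rewrite the equation as $P_{\g_{t_k}}u_k = f_k$ with $f_k=\tfrac{n-4}2\lambda_k u_k^{\frac{n+4}{n-4}}$, and observe $f_k$ is bounded in $L^{\frac{2n}{n+4}}$ (this is the dual critical exponent: $u_k^{\frac{n+4}{n-4}}\in L^{\frac{2n}{n+4}}$ exactly when $u_k\in L^{\frac{2n}{n-4}}$). Fourth-order elliptic $L^p$-estimates for $P_{\g_{t_k}}$ — whose coefficients are bounded in $C^{s-4}$ and in particular uniformly — give $u_k$ bounded in $W^{4,\frac{2n}{n+4}}$. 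Now run the standard Yamabe-type bootstrap: Sobolev embedding improves the integrability of $u_k$, hence of $f_k$, feeding back into the $L^p$-estimate at a higher exponent; after finitely many steps $f_k\in L^p$ for all $p<\infty$, hence $u_k$ is bounded in $W^{4,p}$ for all $p$, hence in $C^{3,\alpha}$. Then $f_k\in C^{3,\alpha}$ (using $\lambda_k$ bounded and $u_k>0$ with $u_k\to1$ so $u_k$ is uniformly bounded below), and Schauder estimates for $P_{\g_{t_k}}$ give $u_k$ bounded in $C^{4+(s-4),\alpha}=C^{s,\alpha}$, i.e.\ in $C^{\min(s,\infty)}$. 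At each stage the bound is uniform in $k$ because the coefficients of $P_{\g_{t_k}}$ are uniformly controlled. One then extracts: any subsequence of $u_k$ has a further subsequence converging in $C^{s}$ (by Arzel\`a--Ascoli, trading a derivative), and the limit must solve $P_{\g_{t_*}}v=\tfrac{n-4}2 Q_{\g_{t_*}}v^{\frac{n+4}{n-4}}$ with $v\ge0$; but $v=1$ is forced since $u_k\to1$ already in $W^{2,2}\subset L^2$. Uniqueness of the limit point of an a priori bounded-in-$C^s$ sequence upgrades this to convergence of the full sequence, giving (ii) in the $C^{s}$-topology; since $s\ge4$ is arbitrary subject to the hypothesis, and if the hypothesis holds for all $s$ (the Whitney $C^\infty$ case), one gets convergence in $C^\infty$.

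From $u_k\to1$ in $C^s$ one reads off the final sentences: $\widetilde\g_k=u_k^{\frac4{n-4}}\g_{t_k}\to 1\cdot\g_{t_*}=\g_{t_*}$ in $C^s$ (product and composition of $C^s$-convergent objects, using $u_k$ bounded below so $u_k^{\frac4{n-4}}$ is a $C^s$ operation), and $\scal$ depends on two derivatives of the metric, so $\scal_{\widetilde\g_k}\to\scal_{\g_{t_*}}$ in $C^{s-2}$; likewise $Q_{\widetilde\g_k}\to Q_{\g_{t_*}}$ (in fact $Q_{\widetilde\g_k}=\lambda_k$ is constant and we already showed $\lambda_k\to Q_{\g_{t_*}}$), confirming (i). The main obstacle is the bootstrap at the critical exponent: the first application of elliptic estimates lands exactly at the critical Sobolev exponent, so the gain in integrability at each step is finite and one must verify the iteration genuinely terminates — this is the familiar but delicate point in all Yamabe/$Q$-curvature regularity arguments, and here it is complicated by the need to keep every estimate uniform in $k$, which requires the $C^s$-continuity hypothesis on $t\mapsto\g_t$ to control the (lower-order) coefficients of $P_{\g_{t_k}}$ uniformly. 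A secondary technical point is justifying the $L^p$ and Schauder estimates for the full fourth-order operator $P_{\g}$ rather than just $\Delta_{\g}^2$; since $P_{\g}$ differs from $\Delta_{\g}^2$ by lower-order terms with controlled coefficients, this is standard but should be cited carefully.
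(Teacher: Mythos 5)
Your part (i) is essentially the paper's argument: you test \eqref{eq:constQcurvequation} against $u_k$, use the $W^{2,2}$-continuity of the quadratic form $u\mapsto\int_M u\,P_\g u\,\vol_\g$, the uniform convergence of the coefficients of $P_{\g_{t_k}}$ coming from the $C^s$-continuity of $t\mapsto\g_t$, and the Sobolev embedding $W^{2,2}\hookrightarrow L^{\frac{2n}{n-4}}$ to pass to the limit in the volume normalization; this is exactly how the paper obtains $Q_{\widetilde\g_k}\to Q_{\g_{t_*}}$.

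The gap is in (ii), and it is the critical-exponent step you yourself flag but do not resolve. Your scheme is a pure boundedness bootstrap starting from $u_k$ bounded in the critical space $L^{\frac{2n}{n-4}}$, and the integrability gain at the first step is not ``finite but positive'' --- it is zero: $f_k$ bounded in $L^{p_0}$ with $p_0=\frac{2n}{n+4}$ gives $u_k$ bounded in $W^{4,p_0}$, and the Sobolev embedding yields $L^{q}$ with $\frac1q=\frac1{p_0}-\frac4n=\frac{n-4}{2n}$, i.e.\ $q=\frac{2n}{n-4}$ again. Hence the iteration never improves on its starting point and the claim that ``after finitely many steps $f_k\in L^p$ for all $p<\infty$'' does not follow; breaking the criticality requires an additional ingredient (e.g.\ a Brezis--Kato/Moser-type splitting of the potential $u_k^{\frac{8}{n-4}}$, done uniformly in $k$ using its equi-integrability in $L^{n/4}$, which here would come from the strong convergence $u_k\to1$ in $L^{\frac{2n}{n-4}}$ --- information your boundedness scheme never exploits). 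The paper proceeds differently: it uses the strong $W^{2,2}$-convergence $u_k\to1$ together with part (i) to show that, after moving all lower-order terms of $P_{\g_{t_k}}$ to the right-hand side as in \eqref{eq:lapluk}, the right-hand side \emph{converges to $0$} in $L^{p_0}$ (the divergence terms go to $0$ in $L^2\subset L^{p_0}$, and the two zeroth-order terms cancel in the limit because $Q_{\g_{t_k}}\to Q_{\g_{t_*}}$ and $Q_{\widetilde\g_k}\to Q_{\g_{t_*}}$); elliptic estimates then give \emph{convergence} of $u_k$ to the known smooth limit $1$ in $W^{4,p_0}$, and the bootstrap is run on $u_k-1\to0$ rather than on a priori bounds for an unknown solution, which also makes your Arzel\`a--Ascoli/uniqueness-of-limit extraction unnecessary. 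A secondary, fixable point: you invoke a uniform positive lower bound on $u_k$ (needed so that $u_k^{\frac{n+4}{n-4}}$ is a $C^{3,\alpha}$ expression when the exponent is not an integer) before any $C^0$ control on $u_k$ is available; in your ordering this is circular and must be postponed until uniform convergence has been established.
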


\begin{proof}
Since the metrics $\g_{t_k}$ converge to $\g_{t_*}$ in the $C^4$-topology, one obtains: 
\[
\lim_{k \to \infty} Q_{\g_{t_k}}=Q_{\g_{t_*}},\quad\text{and}\quad\lim_{k \to \infty} \scal_{\g_{t_k}}=\scal_{\g_{t_*}}\ \text{in the $C^2$-topology.}
\]
The coefficients of the differential operator $P_{\g_{t_k}}$ tend to those of $P_{\g_{t_*}}$ in the $C^{s-3}$-topology; in particular, they converge uniformly. Moreover, a simple integration by parts argument shows that the quadratic form $u\mapsto\int_M u\,P_\g u\, \vol_\g$ is continuous in the $W^{2,2}$-topology. 
Since $u_k$ converges to $1$ in $W^{2,2}(M)$, using the Sobolev embedding $W^{2,2}(M)\hookrightarrow L^\frac{2n}{n-4}(M)$, we have that
$\Vol(M,\widetilde\g_k)=\int_M u_k^{\frac{2n}{n-4}}\vol_{\g_{t_k}}$
converges to $\Vol(M,\g_{t_*})$. Using these observations and \eqref{eq:totalQcurvpaneitz}, it follows that
\[
Q_{\widetilde\g_k}=\tfrac{2}{n-4}\Vol(M,\widetilde\g_k)^{-1}\int_M u_k \, P_{\g_{t_k}} u_k \, \vol_{\g_{t_k}},
\]
converges to $Q_{\g_{t_*}}$, verifying (i).

Using the costant $Q$-curvature equation, we have that 
\begin{multline}\label{eq:lapluk}
\Delta_{\g_{t_k}}^2 u_k =-\tfrac4{n-2}\div_{\g_{t_k}}\!\big(\!\Ric_{\g_{t_k}}(\nabla u_k,e_i)e_i\big)\\+\tfrac{n^2-4n+8}{2(n-1)(n-2)}\div_{\g_{t_k}}\!(\scal_{\g_{t_k}}\!\nabla u_k)-\tfrac{n-4}2 Q_{\g_{t_k}}u_k+\tfrac{n-4}2 Q_{\widetilde\g_k}\,u_k^{\frac{n+4}{n-4}};
\end{multline}
As observed above, all coefficients of the differential operators appearing in \eqref{eq:lapluk} converge uniformly. Moreover, using the Sobolev embedding $W^{2,2}(M)\hookrightarrow L^\frac{2n}{n-4}(M)$, the right hand side of \eqref{eq:lapluk} converges to $0\in L^p(M)$, where $p=\frac{2n}{n+4}>1.$
Standard elliptic estimates (see for instance \cite[Thm.~9.14, p.\ 240]{GilbargTrudinger01}) imply that $u_k$ converge in $W^{4,p}(M)$, hence (ii) follows from a standard bootstrap argument.
\end{proof}

\subsection{Main bifurcation criterion}
In order to search for bifurcations, we restrict to families of metrics that admit a very special type of submersion defined as follows.

\begin{definition}
A Riemannian submersion $\pi\colon(M,\g_M)\to (B,\g_B)$ is \emph{horizontally Einstein} if the Ricci tensor of $(M,\g_M)$ splits as $\Ric_{\g_M}=\Ric_{\mathcal H}\oplus\Ric_{\mathcal V}$ pointwise according to the splitting $T_p M=\mathcal H_p\oplus\mathcal V_p$ into horizontal and vertical subspaces, and $\Ric_{\mathcal H}=\kappa\,\pi^*(\g_B)$, where the constant $\kappa$ is called the \emph{horizontal Einstein constant}.
\end{definition}

Our key detection tool for bifurcation is the following:

\begin{theorem}\label{thm:abstrbifrisult}
Let $M$ be a closed manifold with $n=\dim M\ge5$, and 
\begin{equation*}
\pi_t\colon(M,\g_t)\to (B,\g_B), \quad t\in[t_*-\varepsilon,t_*+\varepsilon],
\end{equation*}
be a $1$-parameter family of horizontally Einstein Riemannian submersions with minimal fibers. 
Assume that $\g_t$ has constant scalar curvature and constant $Q$-curvature for all $t$, and denote by $\kappa_t$ the horizontal Einstein constant of $\pi_t$.
Given an eigenvalue $\lambda>0$ of the Laplacian $\Delta_{\g_B}$ of the base manifold $(B,\g_B)$, set
\begin{equation}\label{eq:alphabetadef}
\alpha_t=\frac{(n^2-4n+8)\scal_{\g_t}-\,8\,\kappa_t(n-1)}{4(n-1)(n-2)}\qquad \text{and}\qquad\beta_t=-2\,Q_{\g_t},
\end{equation}
and assume that
\begin{equation*}
\tfrac12\lambda^2+\alpha_{t_*}\,\lambda+\beta_{t_*}=0 \qquad \text{and}\qquad \alpha'_{t_*}\,\lambda+\beta'_{t_*}\ne0,
\end{equation*}
where $\alpha'_{t_*}=\frac{\dd}{\dd t}\alpha_t\big|_{t=t_*}$ and $\beta'_{t_*}=\frac{\dd}{\dd t}\beta_t\big|_{t=t_*}$.
Then $t_*$ is a bifurcation instant for the family $\g_t$ of constant $Q$-curvature metrics. If, in addition, $\lambda\ne\frac{\scal_{\g_{t_*}}}{n-1}$, then the constant $Q$-curvature metrics in the bifurcation branch that are sufficiently close to $\g_{t_*}$ do not have constant scalar curvature.
\end{theorem}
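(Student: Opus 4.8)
The plan is to apply a variational bifurcation criterion based on a jump in the Morse index of $\g_t$ as a critical point of the constrained functional described in Remark~\ref{rem:constrvarprob}, so the first task is to compute that index in terms of the spectral data of $\Delta_{\g_B}$. The key structural input is the hypothesis that $\pi_t$ is a horizontally Einstein Riemannian submersion with minimal fibers: on \emph{basic} functions $\psi = \varphi\circ\pi_t$ pulled back from the base, the vertical part of every operator built from $\nabla$ and $\Delta_{\g_t}$ vanishes, $\Delta_{\g_t}\psi = (\Delta_{\g_B}\varphi)\circ\pi_t$, and the Ricci term $\div_{\g_t}(\Ric_{\g_t}(\nabla\psi,e_i)e_i)$ collapses to $\kappa_t\,\Delta_{\g_t}\psi$ because $\nabla\psi$ is horizontal and $\Ric_{\mathcal H}=\kappa_t\,\pi_t^*\g_B$ (here minimality of the fibers is what makes $\Delta_{\g_t}$ commute with pullback and kills the O'Neill cross-terms). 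Substituting into the formula for $P_{\g_t}$ and using that $\scal_{\g_t}$, $Q_{\g_t}$ are constant, one gets on basic functions
\begin{equation*}
P_{\g_t}\psi = \Delta_{\g_t}^2\psi + 2\alpha_t\,\Delta_{\g_t}\psi + \big(\tfrac{n-4}{2}Q_{\g_t}\big)\psi,
\end{equation*}
with $\alpha_t$ exactly as in \eqref{eq:alphabetadef}; hence the Jacobi operator $J_{\g_t}=\tfrac12 P_{\g_t}-\tfrac{n+4}{4}Q_{\g_t}$ acts on the $\lambda$-eigenspace of $\Delta_{\g_B}$ (pulled back) as multiplication by $\tfrac12\lambda^2 + \alpha_t\lambda + \beta_t$, since $\tfrac12\cdot\tfrac{n-4}{2}Q_{\g_t} - \tfrac{n+4}{4}Q_{\g_t} = -2Q_{\g_t}=\beta_t$. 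Thus each base eigenvalue $\lambda$ contributes (with its multiplicity) a block of eigenvalues of $J_{\g_t}$ with sign governed by $f_t(\lambda):=\tfrac12\lambda^2+\alpha_t\lambda+\beta_t$.

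Next I would invoke the classical variational bifurcation theorem (Smoller--Wasserman / Kielh\"ofer, as in \cite{smoller-wasserman,kielhofer,bp-calcvar,OtobaPetean1}): if the family $J_{\g_t}$ of Fredholm self-adjoint operators is nondegenerate at $t_*$ except for a crossing eigenvalue, i.e. if $\g_{t_*}$ is a degenerate critical point of the constrained functional (so $\ker J_{\g_{t_*}}\neq 0$ restricted to the constraint's tangent space) and the Morse index changes as $t$ crosses $t_*$, then $t_*$ is a bifurcation instant. The hypothesis $\tfrac12\lambda^2+\alpha_{t_*}\lambda+\beta_{t_*}=0$ gives exactly that the $\lambda$-block lies in $\ker J_{\g_{t_*}}$, and $\alpha'_{t_*}\lambda + \beta'_{t_*}\neq 0$ is $\frac{\dd}{\dd t}f_t(\lambda)\big|_{t_*}\neq 0$, i.e. the crossing is transverse, so the index jumps by the (nonzero) multiplicity of $\lambda$ as $t$ passes $t_*$. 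One must check that the eigenfunctions involved are $L^2$-orthogonal to the constant function $1$ (the normal to the constraint surface $\|u\|=1$), which holds automatically since $\lambda>0$ means the pulled-back eigenfunctions are nonconstant with zero mean; and that no \emph{other} eigenvalue block of $J_{\g_t}$ crosses zero exactly at $t_*$ — this can be arranged by shrinking $\varepsilon$, since only finitely many blocks are near zero and the crossing datum is an open condition, or handled by the standard formulation of the criterion which only requires a net index change. The bifurcating $u_k\to 1$ in $W^{2,2}$ as required, giving nonconstant conformal metrics $u_k^{4/(n-4)}\g_{t_k}$ of constant $Q$-curvature.

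For the final assertion about nonconstant scalar curvature, the idea is a dichotomy argument: suppose, for contradiction, that infinitely many metrics $\widetilde\g_k = u_k^{4/(n-4)}\g_{t_k}$ in the branch have \emph{constant} scalar curvature. Then each $u_k$ solves simultaneously the constant $Q$-curvature equation and the Yamabe equation $L_{\g_{t_k}} v_k = \mu_k v_k^{(n+2)/(n-2)}$ for the appropriate conformal factor, and $u_k\to 1$. Linearizing the Yamabe equation about the constant solution $1$ at $t=t_*$ shows that the relevant Jacobi operator for the scalar-curvature problem is, on basic functions and after the same submersion reduction, a \emph{first}-degree polynomial in $\Delta_{\g_{t_*}}$ — concretely (up to a positive constant) $\Delta_{\g_{t_*}} - \tfrac{\scal_{\g_{t_*}}}{n-1}$ — so that a nonconstant solution curve bifurcating off $1$ must have its leading deformation an eigenfunction for the base eigenvalue $\lambda = \tfrac{\scal_{\g_{t_*}}}{n-1}$. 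Since we are assuming $\lambda\neq\tfrac{\scal_{\g_{t_*}}}{n-1}$, the $\lambda$-eigenspace is not in the kernel of the scalar Jacobi operator, so by the implicit function theorem the only constant-scalar-curvature metric near $\g_{t_*}$ in the conformal class is a homothety of $\g_{t_*}$ itself — contradicting that the $u_k$ are nonconstant and converge to $1$. Hence all but finitely many metrics in the branch have nonconstant scalar curvature.

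The main obstacle I expect is the submersion reduction in the first paragraph: verifying \emph{carefully} that, under horizontal Einstein plus minimal fibers, every term of $P_{\g_t}$ restricts on basic functions to a polynomial in $\Delta_{\g_t}$ with the stated coefficients, rather than picking up O'Neill integrability tensor contributions or horizontal-divergence-of-vertical terms. This is the computational heart of the theorem (and the point where following \cite{OtobaPetean1} is essential); once it is in place, the bifurcation conclusion is a routine application of the abstract criterion, and the scalar-curvature addendum is a short implicit-function-theorem argument comparing the two Jacobi operators.
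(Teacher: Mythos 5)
Your outline follows the paper's strategy closely: reduce the Paneitz operator on basic functions to a quadratic polynomial in the base Laplacian (your $\alpha_t,\beta_t$ and the eigenvalue formula $\tfrac12\lambda^2+\alpha_t\lambda+\beta_t$ are exactly the paper's), then invoke classical variational bifurcation \cite{smoller-wasserman,kielhofer}, following \cite{OtobaPetean1}. However, there is a genuine gap in how you set up the variational problem: you propose to apply the criterion to the \emph{full} constrained functional of Remark~\ref{rem:constrvarprob} on $M$, while you only compute the \emph{basic} blocks of the spectrum of $J_{\g_t}$. The hypotheses of the theorem (constancy of $\scal_{\g_t}$, $Q_{\g_t}$, $\kappa_t$ and the spectrum of $\Delta_{\g_B}$) give no control whatsoever over the non-basic part of $\operatorname{Spec}(J_{\g_t})$, so you cannot verify that the only degeneracy at $t_*$ is the designated crossing, that $1$ is nondegenerate at $t_*\pm\varepsilon$, or that the net Morse index actually jumps; your remark about ``only finitely many blocks near zero'' and shrinking $\varepsilon$ only addresses basic modes. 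The missing idea --- and the crux of the paper's proof --- is that \emph{horizontality is a natural constraint}: one restricts the variational problem to basic conformal factors, i.e.\ works with the functional $\mathcal E_t(v)=\int_B v\,P_t v\,\vol_{\g_B}$ on $B$ subject to the (now subcritical, since $\dim B<n$) constraint \eqref{eq:constraintbase}. Critical points of this restricted problem lift to genuine constant $Q$-curvature metrics on $M$ (because the horizontal lift of $P_tv$ is $P_{\g_t}\overline v$), and the Jacobi operator of the restricted problem is exactly $\tfrac12\Delta_{\g_B}^2+\alpha_t\Delta_{\g_B}+\beta_t$, whose \emph{entire} spectrum is the quadratic evaluated on $\Spec(\Delta_{\g_B})$; only then do the Smoller--Wasserman hypotheses become checkable from the data in the statement. (A small sign slip: with the convention $\Delta=-\div\nabla$, the Ricci term on basic functions is the lift of $-\kappa_t\Delta_{\g_B}v$, not $+\kappa_t\Delta_{\g_t}\psi$; your final formula for $\alpha_t$ is nonetheless the correct one.)

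The scalar-curvature addendum also has a gap as written. Your implicit-function-theorem conclusion that ``the only constant scalar curvature metric near $\g_{t_*}$ in its conformal class is a homothety'' requires the \emph{whole} kernel of the linearized Yamabe operator to vanish, i.e.\ $\scal_{\g_{t_*}}/(n-1)\notin\Spec(\Delta_{\g_B})$ (up to the zero mode), which does not follow from the single hypothesis $\lambda\ne\scal_{\g_{t_*}}/(n-1)$: some \emph{other} base eigenvalue could equal $\scal_{\g_{t_*}}/(n-1)$, so the Yamabe problem may well be degenerate at $t_*$. The paper instead argues that $t_*$ is not a bifurcation instant for the constant scalar curvature problem relative to the given $\lambda$, citing \cite[Cor.~4]{LPZ12b}, which is tailored to exclude constant scalar curvature along branches issuing in the $\lambda$-directions; to make your argument rigorous you would need to use that the bifurcating branch emanates tangentially to the $\lambda$-eigenspace and compare with the kernel of $\Delta_{\g_B}-\scal_{\g_{t_*}}/(n-1)$, rather than claim local uniqueness of constant scalar curvature metrics outright.
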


\begin{proof}
The strategy we employ is inspired by a bifurcation criterion for the Yamabe problem on Riemannian submersions due to Otoba and Petean~\cite{OtobaPetean1}, which simplifies earlier results by the first and second named authors~\cite{bp-calcvar,bp-pacific}.

Given a smooth function $v\colon B\to\mathds R$, denote by $\overline v\colon M\to\R$ its horizontal lift, i.e., $\overline v=v\circ\pi_t$. Functions of the form $\overline v$ are called \emph{basic functions}. 
Similarly, given a vector field $V\in\mathfrak X(B)$, denote by $\overline V\in\mathfrak X(M)$ its horizontal lift, i.e., the unique vector field on $M$ which is $\pi_t$-related to $V$. From the definition of Riemannian submersion, the gradient $\nabla \overline v$ with respect to $\g_t$ is the horizontal lift of the gradient $\nabla v$ with respect to $\g_B$. Since the fibers of $\pi_t$ are minimal, $\div_{\g_t}\overline V$ is the horizontal lift of $\div_{\g_B}V$. In particular, $\Delta_{\g_t}\overline v$ is the horizontal lift of $\Delta_B v$; and $\Delta_{\g_t}^2\overline v$ is the horizontal lift of $\Delta_{\g_B}^2v$. Using that $\g_t$ is horizontally Einstein, $\operatorname{div}\big(\sum_i\Ric_{\g_t}(\nabla_{\g_t}\overline v,e_i)e_i\big)$ is the horizontal lift of $-\kappa_t\Delta_{\g_B}v$.

Consider the family $P_t$ of fourth-order linear differential operators on $B$ given~by:
\begin{equation*}
P_tv=\Delta_{\g_B}^2v+2\,\alpha_t\,\Delta_{\g_B}v-\tfrac{n-4}{4}\,\beta_t\,v.
\end{equation*}
Using the observations above, it follows that the horizontal lift of $P_tv$ is $P_{\g_t}\overline v$. Thus, for a positive smooth function $v\colon B\to\mathds R$, the conformal metric $\overline v^\frac4{n-4}\,\g_t$ has constant $Q$-curvature equal to $C$ if and only if $v$ satisfies:
\begin{equation}\label{eq:constQcurvbase}
P_tv=\tfrac{n-4}2\, C\, v^\frac{n+4}{n-4}.
\end{equation}
Observe that \eqref{eq:constQcurvbase} is an elliptic equation with \emph{subcritical} exponent, since $\dim B<n$.
Clearly, \eqref{eq:constQcurvbase} is the Euler--Lagrange equation for critical points in $C^\infty(B)$ of the quadratic functional $\mathcal E_t(v)=\int_B v\,P_tv\, \vol_{\g_B}$ subject to the constraint:\footnote{Recall that the fibers of a Riemannian submersion with minimal fibers have constant volume, and denote by $\mathfrak v_t$ the $\g_t$-volume of the fibers of $\pi_t$. Thus, for any continuous map $v\colon B\to\R$,
\begin{equation}\label{eq:L2normhorizontal}
\Vert\overline v\Vert_{L^\frac{2n}{n-4}(M,\vol_{\g_t})}=\mathfrak v_t^\frac{n-4}{2n}\,\Vert v\Vert_{L^\frac{2n}{n-4}(B,\vol_{\g_B})}.
\end{equation}
In particular, $C^\infty(B)\ni v\mapsto\overline v\in C^\infty(M)$ maps $L^\frac{2n}{n-4}$-spheres to $L^\frac{2n}{n-4}$-spheres.}
\begin{equation}\label{eq:constraintbase}
\Vert v\Vert_{L^\frac{2n}{n-4}(B,\vol_{\g_B})}=\textrm{const.}
\end{equation}
We may choose the value of this constant equal to $\Vol(B,\g_B)^\frac{n-4}{2n}$ for all $t$, in such way that \eqref{eq:constraintbase} is satisfied by the constant function $1$.

Recalling the characterization of constant $Q$-curvature metrics in $[\g_0]$ as constrained critical points (Remark~\ref{rem:constrvarprob}), we conclude that \emph{horizontality is a natural constraint} for the constant $Q$-curvature problem.\footnote{This can also be proved directly since, under these assumptions, $\dd\mathcal Q(\g_t)f$ vanishes whenever $f\colon M\to\mathds R$ is a function with zero average on the fibers of the submersion.}
In other words, conformal metrics $\g=\overline v^\frac{4}{n-4}\,\g_0$ such that $Q_\g$ is constant, where $\overline v$ is a basic function, are precisely the critical points of the restriction of the total $Q$-curvature functional \eqref{eq:deftotQcurvfunct} to the subset of basic functions. In particular, given such a conformal factor $\overline v$, denoting by $v\colon B\to\R$ the corresponding function on the base so that $\overline v=v\circ\pi_t$, the second variation of the quadratic functional $\mathcal E_t$ at the critical point $v$ is given by the restriction of the Jacobi operator $J_\g$ defined in \eqref{eq:jacobi}. We thus define the $1$-parameter family of Jacobi operators:
\begin{equation}\label{eq:defJacobibase}
\phantom{,\qquad\phi\in C^\infty(B).}
J_t\phi=\tfrac12P_t\phi-\tfrac{n+4}4Q_{\g_t}\phi,\qquad\phi\in C^\infty(B).
\end{equation}
Consider the restriction of $J_t$ to the tangent space to the sphere \eqref{eq:constraintbase} at $v=1$, which consists of functions $\phi\colon B\to\R$ such that $\int_B\phi\,\vol_{\g_B}=0$. Note that this space is invariant under $J_t$, and hence the eigenvalues of the restriction are exactly the eigenvalues of $J_t$ with nonconstant eigenfunctions.
Since $J_t=\tfrac12\Delta^2_{\g_B}+\alpha_t\Delta_{\g_B}+\beta_t$ is a polynomial in $\Delta_{\g_B}$, its eigenvalues are:
\[\tfrac12\lambda^2+\alpha_t\,\lambda+\beta_t,\]
where $\lambda$ is an eigenvalue of $\Delta_{\g_B}$. As the spectrum of $\Delta_{\g_B}$ is discrete, so is the spectrum of $J_t$ for all $t$.
The assumptions in the statement imply that the constant function $1$ is a degenerate critical point of $\mathcal E_{t_*}$ subject to the constraint \eqref{eq:constraintbase}, that its Morse index jumps at $t=t_*$, and that $1$ is a nondegenerate critical point of $\mathcal E_{t_*\pm\varepsilon}$ for $\varepsilon>0$ sufficiently small. The conclusion that $t_*$ is a bifurcation instant then follows from standard variational bifurcation results, see e.g.~\cite{smoller-wasserman,kielhofer}. As to the last claim, $t_*$ is \emph{not} a bifurcation instant for the constant scalar curvature problem if $\lambda\ne\frac{\scal_{\g_{t_*}}}{n-1}$, see~\cite[Cor.~4]{LPZ12b}. Thus, metrics sufficiently close to $\g_{t_*}$ and nonhomothetic to some $\g_t$ do not have constant scalar curvature.
\end{proof}

\begin{remark}
It follows from the above proof that conformal factors associated with bifurcating solutions detected by Theorem~\ref{thm:abstrbifrisult} are basic, i.e., constant along the fibers of the submersion $\pi_t$. Thus, the conformal deformations of $(M,\g_t)$ producing other metrics with constant $Q$-curvature only involve horizontal directions.
\end{remark}

\subsection{Infinite bifurcations from asymptotic behavior}\label{subsec:asymptbif}
We now specialize to $1$-parameter families $\pi_t\colon (M,\g_t)\to (B,\g_B)$ of Riemannian submersions obtained rescaling
the vertical space of a fixed Riemannian submersion $\pi\colon (M,\g) \to (B,\g_B)$ using the parameter $0<t<+\infty$.
Such a family $\pi_t$ is called the \emph{canonical variation} of $\pi$ in Besse~\cite[\S9 G]{besse}.
We give sufficient conditions for the above criterion (Theorem~\ref{thm:abstrbifrisult}) to apply along each element $t_\lambda$ of sequences that either accumulate at $0$ or $+\infty$. Geometrically (in Gromov-Hausdorff sense), these correspond respectively to families $(M,\g_{t})$ of metrics with constant $Q$-curvature that bifurcate infinitely many times either as they collapse to the base $(B,\g_B)$ or as they degenerate to a sub-Riemannian limit. We henceforth systematically ignore other bifurcation instants $t_*$ that may exist at a bounded distance away from $0$ or $+\infty$.

Let $\pi\colon (M,\g)\to (B,\g_B)$ be a Riemannian submersion with totally geodesic fibers isometric to $(F,\g_F)$, and denote by $\mathcal H$ and $\mathcal V$ the corresponding horizontal and vertical distributions, so that $T_pM=\mathcal H_p\oplus \mathcal V_p$ is a $\g$-orthogonal direct sum for all $p\in M$. Assume that $(B,\g_B)$ and $(F,\g_F)$ are Einstein manifolds, that is, there exist $\Lambda_B,\Lambda_F\in\R$ such that 
\begin{equation}\label{eq:BFEinstein}
\Ric_B=\Lambda_B \,\g_B \quad  \text{and}\quad \Ric_F=\Lambda_F\,\g_F.
\end{equation}
Set $n=\dim M$ and $l=\dim F$, and note that $n\geq l$.
Following Besse~\cite[\S 9.33]{besse}, denote by $(X_i)_{i=1}^{n-l}$ and $(U_j)_{j=1}^l$  $\g$-orthonormal bases of $\mathcal H$ and $\mathcal V$ respectively, and~let 
\begin{equation}\label{eq:AAs}
\begin{aligned}
(A_X,A_Y) &:=\sum_{i=1}^{n-l} \g(A_X X_i,A_Y X_i)=\sum_{j=1}^l \g(A_X U_j, A_Y U_j)\\
(AU,AV) &:=\sum_{i=1}^{n-l} \g(A_{X_i} U,A_{X_i} V),
\end{aligned}
\end{equation}
where $A$ is the Gray-O'Neill tensor $A_{Z} W=(\nabla_{Z_\mathcal H} W_\mathcal V)_\mathcal H +(\nabla_{Z_\mathcal H} W_\mathcal H)_\mathcal V$, $Z,W\in T_pM$.
Suppose there exist $\zeta,\eta\in\R$ such that, for all $X,Y\in\mathcal H$ and $U,V\in\mathcal V$,
\begin{equation}\label{eq:Adiag}
(A_X,A_Y)= \zeta \,\g(X,Y) \quad\text{and}\quad (AU,AV)= \eta \,\g(U,V).
\end{equation}

Under the above hypotheses, the canonical variation $\g_t=t\, \g_\mathcal V \oplus \g_\mathcal H$, $t>0$, is a metric such that $\pi_t\colon (M,\g_t)\to(B,\g_B)$ is a Riemannian submersion with totally geodesic fibers isometric to $(F,t\,\g_F)$, see \cite[Prop.~9.68]{besse}. Moreover, the Ricci tensor $\Ric_t$ of $(M,\g_t)$ satisfies, see \cite[Prop.~9.70]{besse}:
\begin{equation}\label{eq:RicciDecomp}
\begin{aligned}
\Ric_t(X,Y)&=(\Lambda_B-2\zeta t)\,\g(X,Y),\\ 
\Ric_t\,(U,V)&=(\Lambda_F+\eta t^2)\,\g(U,V),\\ 
\Ric_t(X,V)&=0,\\
\end{aligned}
\end{equation}
for all $X,Y\in\mathcal H$ and $U,V\in\mathcal V$, where $\eta l=\zeta (n-l)$, see~\cite[\S 9.37]{besse}. In particular, $(M,\g_t)$ has constant scalar curvature
\begin{equation}\label{eq:scalcanvar}
\scal_t=\frac{l \Lambda_F}{t}+\Lambda_B (n-l)-\eta l t,
\end{equation}
as well as constant $Q$-curvature
\begin{equation}\label{eq:Qcanvar}
\begin{aligned}
Q_t&=-\frac{2(n-l)(\Lambda_B-2\zeta t)^2}{(n-2)^2} -\frac{2 l}{(n-2)^2} \left(\frac{\Lambda_F}{t}+\eta t\right)^2\\
&\quad+\frac{n^3-4n^2+16n-16}{8(n-1)^2(n-2)^2}\left(\frac{l \Lambda_F}{t}+(n-l)\Lambda_B-\eta l t\right)^2.
\end{aligned}
\end{equation}
Clearly, the Riemannian submersion $\pi_t$ is horizontally Einstein, with horizontal Einstein constant $\kappa_t=\Lambda_B-2\zeta t$.

Using the above notation, we now establish conditions for the existence of infinitely many bifurcation instants accumulating at $0$ and $\infty$.

\begin{proposition}\label{prop:bifurcation0}
Suppose that one of the following dimensional assumptions holds:
\begin{enumerate}[\rm (D1)]
\item $5\leq n\leq 8$ and $l\geq3$, or
\item $n\geq 9$ and $l\geq2$.
\end{enumerate}
If $\Lambda_F>0$, then there exists a sequence $\{t_\lambda\}$ accumulating at~$0$ of bifurcation instants for the family $\g_t$. Moreover, for $t_\lambda$ sufficiently small, the bifurcating metrics sufficiently close to $\g_{t_\lambda}$ do not have constant scalar curvature.
\end{proposition}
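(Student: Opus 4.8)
The plan is to apply the bifurcation criterion of Theorem~\ref{thm:abstrbifrisult}, with $(B,\g_B)$ as the base, along a suitable sequence of instants $t_\lambda\searrow0$ — one attached to (almost) every eigenvalue $\lambda$ of the base Laplacian $\Delta_{\g_B}$. The structural hypotheses of that theorem come for free here: by \eqref{eq:scalcanvar} and \eqref{eq:Qcanvar} the canonical variation $\g_t=t\,\g_\mathcal V\oplus\g_\mathcal H$ has constant scalar and constant $Q$-curvature, its fibers are totally geodesic (hence minimal), and $\pi_t$ is horizontally Einstein with horizontal Einstein constant $\kappa_t=\Lambda_B-2\zeta t$. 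So the whole problem reduces to exhibiting, for each sufficiently large eigenvalue $\lambda$ of $\Delta_{\g_B}$, an instant $t_\lambda$ close to $0$ at which the two conditions of Theorem~\ref{thm:abstrbifrisult} — namely $\tfrac12\lambda^2+\alpha_{t_\lambda}\lambda+\beta_{t_\lambda}=0$ and $\alpha'_{t_\lambda}\lambda+\beta'_{t_\lambda}\ne0$ — are satisfied, together with checking that $t_\lambda\to0$.

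The first thing I would do is record the behavior of the coefficients $\alpha_t,\beta_t$ of \eqref{eq:alphabetadef} as $t\to0^+$. Substituting \eqref{eq:scalcanvar} and \eqref{eq:Qcanvar}, these are finite Laurent polynomials in $t$, with $\alpha_t=\frac{a}{t}+O(1)$ and $\beta_t=\frac{b}{t^{2}}+O(t^{-1})$ as $t\to0^+$, where $a=\frac{(n^2-4n+8)\,l\,\Lambda_F}{4(n-1)(n-2)}>0$ (note $n^2-4n+8=(n-2)^2+4>0$ and $\Lambda_F>0$) and
\[b=-\frac{2\,l\,\Lambda_F^{2}}{(n-2)^{2}}\left(\frac{(n^3-4n^2+16n-16)\,l}{8(n-1)^{2}}-2\right).\]
The role of the dimensional hypotheses is precisely that (D1)--(D2) ensure $b<0$: indeed $b<0\Leftrightarrow l>\frac{16(n-1)^{2}}{n^3-4n^2+16n-16}$, and one checks elementarily that this threshold lies strictly between $2$ and $3$ when $5\le n\le8$ and strictly below $2$ when $n\ge9$, so that (D1) and (D2) respectively guarantee that $l$ exceeds it. From now on I would assume $b<0$ and fix $\varepsilon_0>0$ with $\beta_t<0$ on $(0,\varepsilon_0)$.

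Next I would isolate the relevant root. On $(0,\varepsilon_0)$ one has $\alpha_t^{2}-2\beta_t>\alpha_t^{2}\ge0$, so
\[\lambda_+(t):=-\alpha_t+\sqrt{\alpha_t^{2}-2\beta_t}\]
is real-analytic and strictly positive there, and is the unique positive root of $\mu\mapsto\tfrac12\mu^{2}+\alpha_t\mu+\beta_t$. A direct expansion shows $\lambda_+(t)=\frac{c}{t}+O(1)$, with $c:=\sqrt{a^{2}-2b}-a>0$ (positive exactly because $b<0$). Hence $\lambda_+(t)\to+\infty$ as $t\to0^+$, while $\lambda_+$ is bounded on every compact subinterval of $(0,\varepsilon_0)$; by the intermediate value theorem $\lambda_+$ attains every sufficiently large value, and only at small $t$. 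Since $\Delta_{\g_B}$ has infinitely many eigenvalues tending to $+\infty$, for all but finitely many of them there is $t_\lambda\in(0,\varepsilon_0)$ with $\tfrac12\lambda^{2}+\alpha_{t_\lambda}\lambda+\beta_{t_\lambda}=0$; since at most two eigenvalues can share a given value $t_\lambda$, this yields infinitely many distinct instants, and necessarily $t_\lambda\to0$.

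Finally I would dispatch the two nondegeneracy requirements, which should hold automatically once $t_\lambda$ is small. Differentiating the Laurent expansions, $\alpha'_t=-\frac{a}{t^{2}}+O(1)$ and $\beta'_t=-\frac{2b}{t^{3}}+O(t^{-2})$, and inserting $\lambda=\lambda_+(t_\lambda)=\frac{c}{t_\lambda}+O(1)$ gives $\alpha'_{t_\lambda}\lambda+\beta'_{t_\lambda}=\frac{-ac-2b}{t_\lambda^{3}}+O(t_\lambda^{-2})$; but $\lambda_+$ being a root forces the algebraic identity $c^{2}+2ac+2b=0$, so $-ac-2b=\tfrac12c^{2}-b>0$ and the leading term is nonzero. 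For the last sentence of the Proposition, one has $\frac{\scal_{\g_{t_\lambda}}}{n-1}=\frac{l\,\Lambda_F}{(n-1)\,t_\lambda}+O(1)$, so it suffices that $c\ne\frac{l\,\Lambda_F}{n-1}$; squaring and simplifying with the explicit formulas for $a$ and $b$, the equality $c=\frac{l\,\Lambda_F}{n-1}$ would force $l=2(n-1)$, which is impossible since $l=\dim F\le n<2(n-1)$ for $n\ge5$. Applying Theorem~\ref{thm:abstrbifrisult} at each such $t_\lambda$ then finishes the argument. I expect the main obstacle to be the bookkeeping in these last two paragraphs: extracting the precise leading coefficients $a,b,c$ from \eqref{eq:scalcanvar}--\eqref{eq:Qcanvar}, recognizing that (D1)--(D2) is exactly the sign condition $b<0$, and checking that the transversality condition and the scalar-curvature condition reduce to the harmless facts $\tfrac12c^{2}-b>0$ and $l\ne2(n-1)$.
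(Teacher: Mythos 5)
Your proposal is correct and follows essentially the same route as the paper's proof: extract the $t\searrow0$ asymptotics of $\alpha_t,\beta_t$ from \eqref{eq:scalcanvar}--\eqref{eq:Qcanvar}, show the positive root $\lambda_+(t)\sim c/t\to+\infty$, match it with arbitrarily large eigenvalues of $\Delta_{\g_B}$, and check the transversality and $\lambda\neq\scal/(n-1)$ conditions asymptotically before invoking Theorem~\ref{thm:abstrbifrisult}. Your operative condition $b<0$ is exactly equivalent to the paper's requirement that the leading coefficient of $\lambda_t^+$ be positive (and implies its condition $a(n,l)>0$), and your identities $-ac-2b=\tfrac12 c^2-b>0$ and $c=\tfrac{l\Lambda_F}{n-1}\Leftrightarrow l=2(n-1)$ reproduce precisely the paper's leading-coefficient computations, so the differences are only in bookkeeping.
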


\begin{proof}
The functions $\alpha_t$ and $\beta_t$ defined in \eqref{eq:alphabetadef} can be computed explicitly using \eqref{eq:scalcanvar} and \eqref{eq:Qcanvar}. From these computations, asymptotically as $t\searrow 0$, we have
\begin{equation*}
\alpha_t^2-2\beta_t\sim \frac{\big((n^4 + 64n -64)l-128(n-1)^2\big)l}{16(n-1)^2(n-2)^2}\frac{\Lambda_F^2}{t^2}+O\left(\frac1t\right).
\end{equation*}
Direct inspection shows that, under any of the dimensional assumptions (D1) or (D2), the numerator in the above leading term satisfies
\begin{equation}\label{eq:uglypoly}
a(n,l)=\big((n^4 + 64n -64)l-128(n-1)^2\big)l>0
\end{equation}
and hence $\alpha_t^2-2\beta_t\nearrow+\infty$ as $t\searrow0$. Thus, for $t>0$ sufficiently small, the equation 
\begin{equation}\label{eq:bifurcation}
\tfrac12\lambda^2+\alpha_t\lambda+\beta_t=0
\end{equation}
has two real solutions $\lambda_t^\pm=-\alpha_t\pm\sqrt{\alpha_t^2-2\beta_t}$.
Moreover, asymptotically as $t\searrow0$,
\begin{equation*}
\lambda_t^+\sim\frac{-(n^2-4n+8)l+\sqrt{a(n,l)}}{4(n-1)(n-2)}\frac{\Lambda_F}{t}+ O\left(1\right).
\end{equation*}
Either (D1) or (D2) implies that the above leading coefficient is positive and, since $\Lambda_F>0$, we have that $\lambda_t^+\nearrow+\infty$ as $t\searrow0$.
Thus, for  arbitrarily large $\lambda\in\Spec(\Delta_B)$, there exists a sufficiently small $t_\lambda>0$ such that $\lambda$ is a solution of \eqref{eq:bifurcation} with $t=t_\lambda$. 
Since $\Spec(\Delta_B)$ is unbounded,  the corresponding sequence $\{t_\lambda\}$ accumulates at $0$.
A direct computation shows that, asymptotically as $t\searrow0$,
\begin{equation*}
\alpha_t'\lambda_t^+ +\beta_t'\sim \frac{a(n,l)-(n^2-4n+8)l\sqrt{a(n,l)}}{16(n-1)^2(n-2)^2}\frac{\Lambda_F^2}{t^3}+O\left(\frac{1}{t^2}\right).
\end{equation*}
The above leading coefficient is positive whenever one of the dimensional assumptions are satisfied, so $\alpha_t'\lambda_t^+ +\beta_t'\nearrow+\infty$ as $t\searrow0$. Thus, Theorem~\ref{thm:abstrbifrisult} implies that $t_\lambda$ is a bifurcation instant for $\g_t$, provided $\lambda\in\Spec(\Delta_B)$ is sufficiently large. 

Finally, in order to prove the last claim, observe that asymptotically as $t\searrow0$,
\begin{equation*}
\lambda_t^+-\frac{\scal_t}{n-1} \sim\frac{\sqrt{a(n,l)}-n^2 l}{4(n-1)(n-2)}\frac{\Lambda_F}{t} +O(1) 
\end{equation*}
The above leading coefficient is nonzero if and only if $n\neq \frac{l}{2}+1$, which is again satisfied because of the dimensional assumptions.
Thus, for $t_\lambda$ sufficiently small, $\lambda\neq\frac{\scal_{t_\lambda}}{n-1}$ and hence the last claim also follows from Theorem~\ref{thm:abstrbifrisult}.
\end{proof}

\begin{proposition}\label{prop:bifurcationInfty}
Suppose that one of the dimensional assumptions {\rm (D1)} or {\rm (D2)} in Proposition~\ref{prop:bifurcation0} holds, or else that
\begin{enumerate}[\rm (D3)]
\item $n\geq 21$ and $l=1$.
\end{enumerate}
 Moreover, assume
that $\zeta>0$, $\eta>0$, and 
\begin{equation}\label{eq:etazetaineq}
\frac{\eta}{\zeta}>\frac{8(n-1)\sqrt{n-l}}{\sqrt{(n^3-4n^2+16n-16)l^2-16(n-1)^2 l }}.
\end{equation}
Then there exists a sequence $\{t_\lambda\}$ that diverges to $+\infty$ of bifurcation instants for the family $\g_t$. Moreover, for $t_\lambda$ sufficiently large, the bifurcating metrics sufficiently close to $\g_{t_\lambda}$ do not have constant scalar curvature.
\end{proposition}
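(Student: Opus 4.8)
The plan is to mirror the structure of the proof of Proposition~\ref{prop:bifurcation0}, but now working in the opposite asymptotic regime $t\nearrow+\infty$. The first step is to substitute the explicit formulae \eqref{eq:scalcanvar} and \eqref{eq:Qcanvar} into the definitions \eqref{eq:alphabetadef} of $\alpha_t$ and $\beta_t$, recalling that the horizontal Einstein constant is $\kappa_t=\Lambda_B-2\zeta t$, and expand $\alpha_t^2-2\beta_t$ for large $t$. Since $\scal_t$ and $Q_t$ are both quadratic polynomials in $t$ (after clearing the $1/t$ terms, which are now lower order), one gets $\alpha_t$ linear in $t$ and $\beta_t=-2Q_t$ quadratic in $t$; the leading behavior of $\alpha_t^2-2\beta_t$ as $t\to+\infty$ is therefore governed by a coefficient that is a polynomial in $n,l$ multiplied by $\zeta^2 t^2$, plus a cross-term involving $\eta\zeta$. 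I expect the condition that this leading coefficient be positive — ensuring $\alpha_t^2-2\beta_t\nearrow+\infty$, so that \eqref{eq:bifurcation} has two real roots $\lambda_t^\pm$ — is exactly what gets encoded partly in the dimensional hypotheses (D1)–(D3) and partly in the inequality \eqref{eq:etazetaineq} relating $\eta/\zeta$. The appearance of $\sqrt{(n^3-4n^2+16n-16)l^2-16(n-1)^2l}$ in the denominator of \eqref{eq:etazetaineq} strongly suggests that the relevant discriminant-type quantity is $(n^3-4n^2+16n-16)l^2-16(n-1)^2l>0$ together with a comparison of $\eta t$ against $\zeta t$ weighted by $\sqrt{n-l}$ over that expression.

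The second step is to check that the larger root $\lambda_t^+=-\alpha_t+\sqrt{\alpha_t^2-2\beta_t}$ tends to $+\infty$ as $t\to+\infty$. Here one must be careful: $\alpha_t$ itself has a definite sign and grows linearly, so $\lambda_t^+$ is a difference of two quantities of the same order $t$, and one needs the coefficient of $t$ in the asymptotic expansion of $\lambda_t^+$ to be strictly positive. This is again a polynomial inequality in $n,l$ and $\eta/\zeta$, and I anticipate it is implied by (and in fact partly motivates) \eqref{eq:etazetaineq}. Once $\lambda_t^+\nearrow+\infty$, since $\Spec(\Delta_{\g_B})$ is discrete and unbounded, for every sufficiently large eigenvalue $\lambda$ there is a (large) $t_\lambda$ with $\tfrac12\lambda^2+\alpha_{t_\lambda}\lambda+\beta_{t_\lambda}=0$, and the sequence $\{t_\lambda\}$ diverges to $+\infty$.

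The third step is to verify the transversality condition $\alpha'_{t_\lambda}\lambda_{t_\lambda}^++\beta'_{t_\lambda}\ne0$: differentiate $\alpha_t,\beta_t$ in $t$, evaluate at $\lambda=\lambda_t^+$, and extract the leading term as $t\to+\infty$ — expected to be a constant (in $n,l,\eta/\zeta$) times $\zeta^2 t$ or $\zeta^2 t^0$, nonzero precisely under the stated hypotheses. Then Theorem~\ref{thm:abstrbifrisult} applies at each such $t_\lambda$, yielding bifurcation. Finally, for the last assertion about nonconstant scalar curvature, one checks $\lambda_t^+-\frac{\scal_t}{n-1}\ne0$ for large $t$ by the same leading-order analysis; the difference is $O(t)$ with a coefficient that vanishes only on an exceptional dimension pair excluded by (D1)–(D3). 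Invoking the final clause of Theorem~\ref{thm:abstrbifrisult} then gives the result.

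The main obstacle I foresee is purely computational bookkeeping: unlike the $t\searrow0$ case, where $\Lambda_F$ drives everything and the $A$-tensor terms are subleading, here both the $\zeta t$ (from $\kappa_t$ and from $\Ric_t$ on $\mathcal H$) and the $\eta t$ (from $\Ric_t$ on $\mathcal V$, hence from $Q_t$) contribute at the same order, so the leading coefficients of $\alpha_t^2-2\beta_t$, of $\lambda_t^+$, and of the transversality expression are genuinely two-variable quantities in $\zeta$ and $\eta$ (equivalently, in the ratio $\eta/\zeta$, using $\eta l=\zeta(n-l)$ to eliminate one of them if convenient). Disentangling which sign conditions come from the dimensional constraints and which force the threshold \eqref{eq:etazetaineq} — and confirming that all three required inequalities (real roots, $\lambda_t^+\to+\infty$, transversality) hold simultaneously under (D1)–(D3) plus $\zeta,\eta>0$ plus \eqref{eq:etazetaineq} — is where the care is needed, though no conceptually new idea beyond Proposition~\ref{prop:bifurcation0} is required.
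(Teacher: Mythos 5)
Your plan follows the paper's proof essentially verbatim: expand $\alpha_t^2-2\beta_t$ for $t\nearrow+\infty$ as a two-variable quadratic form $a(n,l)\eta^2+b(n,l)\eta\zeta+c(n,l)\zeta^2$ times $t^2$, use the dimensional assumptions together with \eqref{eq:etazetaineq} (which indeed forces $\eta/\zeta$ above the larger root of that form) to get two real roots with $\lambda_t^+\nearrow+\infty$, verify that $\alpha_t'\lambda_t^++\beta_t'$ diverges, and apply Theorem~\ref{thm:abstrbifrisult} at each $t_\lambda$. The only (harmless) deviation is in the final claim, where no exceptional dimension pair needs to be excluded: since $\eta>0$ gives $\scal_{\g_t}\searrow-\infty$ while $\lambda_t^+\nearrow+\infty$, the difference $\lambda_t^+-\scal_{\g_t}/(n-1)$ automatically diverges, which is exactly how the paper concludes.
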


\begin{proof}
Computing $\alpha_t$ and $\beta_t$ explicitly, it follows that asymptotically as $t\nearrow+\infty$,
\begin{equation*}
\alpha_t^2-2\beta_t\sim \frac{a(n,l)\eta^2+b(n,l)\eta \zeta + c(n,l)\zeta^2}{16(n-1)^2(n-2)^2} t^2+O(t),
\end{equation*}
where $a(n,l)$ is defined in \eqref{eq:uglypoly} and
\begin{equation}\label{eq:uglypoly2}
b(n,l)=-32 l \left(n^3-5n^2+12n-8\right), \quad c(n,l)=-512 (n-1)^2 \big(n-l-\tfrac12\big).
\end{equation}
Routine computations show that any of the dimensional assumptions (D1), (D2), or (D3) implies:
\begin{equation}\label{eq:abcsigns}
a(n,l)>0, \quad b(n,l)<0, \quad c(n,l)<0,
\end{equation}
and hence $\delta(n,l)=b(n,l)^2-4a(n,l)c(n,l)>0$.
Consider the homogeneous polynomial
\begin{equation*}
q_1(\eta,\zeta)=a(n,l)\eta^2+b(n,l)\eta \zeta + c(n,l)\zeta^2.
\end{equation*}
Since $\delta(n,l)>0$, there is a factorization
\begin{equation*}
\frac{q_1(\eta,\zeta)}{\zeta^2}=a(n,l)\left(\frac{\eta}{\zeta}-\rho_-\right)\left(\frac{\eta}{\zeta}-\rho_+\right),
\end{equation*}
where $\rho_\pm=\frac{-b(n,l)\pm\sqrt{\delta(n,l)}}{2a(n,l)}$. Thus, by \eqref{eq:abcsigns}, we have that $q_1(\eta,\zeta)>0$ if and only if $\frac\eta\zeta<\rho_-$ or $\frac\eta\zeta>\rho_+$. 
The latter condition is satisfied due to \eqref{eq:etazetaineq}, since
\begin{equation*}
\frac{8(n-1)\sqrt{n-l}}{\sqrt{(n^3-4n^2+16n-16)l^2-16(n-1)^2 l }}>\rho_+
\end{equation*}
under any of the dimensional assumptions  (D1), (D2), or (D3). Therefore, we have $\alpha^2_t-2\beta_t\nearrow+\infty$ as $t\nearrow+\infty$. Routine computations imply that 
\begin{equation*}
\lambda^+_t=-\alpha_t+\sqrt{\alpha_t^2-2\beta_t}\sim \frac{(n^2-4n+8)l \eta-16(n-1)\zeta +\sqrt{q_1(\eta,\zeta)}}{4(n-1)(n-2)}t+O(1)
\end{equation*}
hence $\lambda_t^+\nearrow+\infty$ as $t\nearrow+\infty$ if and only if 
$(n^2-4n+8)l \eta-16(n-1)\zeta +\sqrt{q_1(\eta,\zeta)}>0$.
This holds by our assumptions, because $\frac{16(n-1)}{(n^2-4n+8)l}<\rho_+$.
Thus, for arbitrarily large $\lambda\in\Spec(\Delta_B)$, there exists a sufficiently large $t_\lambda>0$ such that $\lambda$ is a solution of \eqref{eq:bifurcation} with $t=t_\lambda$. 
Since $\Spec(\Delta_B)$ is unbounded,  the corresponding sequence $\{t_\lambda\}$ diverges to $+\infty$.
Moreover, similar computations show that \eqref{eq:etazetaineq} implies $\alpha'_t\lambda_t^+ +\beta'_t\searrow -\infty$ as $t\nearrow+\infty$. Thus, Theorem~\ref{thm:abstrbifrisult} implies that $t_\lambda$ is a bifurcation instant for $\g_t$, provided $\lambda\in\Spec(\Delta_B)$ is sufficiently large.

Finally, the last claim follows from the fact that $\scal_t\searrow-\infty$ as $t\nearrow +\infty$ since $\eta>0$, hence $\lambda_t^+-\frac{\scal_t}{n-1} \nearrow+\infty$. In particular, for $t_\lambda$ sufficiently large, $\lambda\neq\frac{\scal_t}{n-1}$ and hence the claim also follows from Theorem~\ref{thm:abstrbifrisult}.
\end{proof}

\begin{remark}
Assumption \eqref{eq:etazetaineq} is stated (for convenience) in terms of the ratio of $\eta$ and $\zeta$, which are quantities related to the Gray-O'Neill $A$-tensor of the submersion $F\to M\to B$, see \eqref{eq:Adiag}. Nevertheless, \eqref{eq:etazetaineq} is a \emph{purely dimensional} restriction, that is, it only depends on the dimensions of $F$ and $B$ and not on the $A$-tensor of the bundle itself, since $\frac{\eta}{\zeta}=\frac{n-l}{l}$, see~\cite[\S 9.37]{besse}. 
\end{remark}

\begin{remark}
Submersions with $1$-dimensional fibers are included in Proposition~\ref{prop:bifurcationInfty}, but excluded altogether from Proposition~\ref{prop:bifurcation0}, because $l=1$ implies $\Lambda_F=0$. Under these conditions, $\alpha^2_t-2\beta_t$ remains bounded as $t\searrow0$, so there can be at most \emph{finitely many} bifurcation instants for $t$ near $0$.
\end{remark}

\section{Homogeneous examples with two isotropy summands}
\label{sec:homex}

In this section, we use homogeneous spaces to provide a wealth of examples of Riemannian submersions to which the results (Propositions~\ref{prop:bifurcation0} and \ref{prop:bifurcationInfty}) from the previous section apply. Given a triple of compact Lie groups $\H\subset\K\subset\G$, define
\begin{equation}\label{eq:homfib}
\pi\colon \G/\H\to \G/\K, \quad \pi(g\H)=g\K.
\end{equation}
Denote by $\mathfrak h\subset\mathfrak k\subset\mathfrak g$ the corresponding Lie algebras. Endow $\mathfrak g$ with a bi-invariant inner product, and let $\mathfrak m$ and $\mathfrak p$ be complements  such that the direct sums 
\begin{equation*}
\mathfrak g=\mathfrak k\oplus\mathfrak m \quad \text{ and }\quad \mathfrak k=\mathfrak h\oplus\mathfrak p
\end{equation*}
are orthogonal. There is a bijective correspondence between $\Ad(\K)$-invariant inner products on $\mathfrak m$ and homogeneous ($\G$-invariant) Riemannian metrics on $\G/\K$; and analogously for $\Ad(\H)$-invariant inner products on $\mathfrak p$ and homogeneous ($\K$-invariant) Riemannian metrics on $\K/\H$. The orthogonal direct sum of two such inner products is an inner product on $\mathfrak m\oplus\mathfrak p$ that corresponds to a homogeneous ($\G$-invariant) Riemannian metric $\g$ on $\G/\H$ for which \eqref{eq:homfib} is a Riemannian submersion with totally geodesic fibers isometric to $\K/\H$, see~\cite[Thm.~9.80]{besse}. Clearly, $\mathfrak m$ and $\mathfrak p$ are respectively identified with the horizontal and vertical spaces of this submersion. Assume that $\mathfrak m$ and $\mathfrak p$ are inequivalent irreducible $\Ad(\H)$-representations. Then, by Schur's Lemma, the $\Ad(\H)$-invariant pairings \eqref{eq:AAs} are multiples of the metric, i.e., \eqref{eq:Adiag} holds, and the Ricci tensor $\Ric_{\G/\H}\colon\mathfrak m\oplus\mathfrak p\to \mathfrak m\oplus\mathfrak p$ is block diagonal. Thus, \eqref{eq:BFEinstein} holds as a consequence of \eqref{eq:Adiag} and \cite[Prop.~9.70]{besse}.

\begin{proposition}\label{prop:HomExamples}
Let $\H\subset\K\subset\G$ be compact Lie groups as above, and assume that $\mathfrak m$ and $\mathfrak p$ are inequivalent irreducible $\Ad(\H)$-representations, $\K$ is not Abelian, and 
that $n=\dim\G/\H$ and $l=\dim\K/\H$ satisfy either {\rm (D1)} or {\rm (D2)} in Proposition~\ref{prop:bifurcation0}. Then there exists an infinite
sequence of branches of constant $Q$-curvature inhomogeneous metrics bifurcating from the (canonical variation) family $\g_t$ of $\G$-invariant metrics  on $\G/\H$ as $t\searrow0$. On each bifurcating branch, infinitely many of these inhomogeneous metrics do not have constant scalar curvature.
\end{proposition}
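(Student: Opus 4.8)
The plan is to verify the hypotheses of Proposition~\ref{prop:bifurcation0} for the canonical variation $\g_t$ coming from the homogeneous fibration \eqref{eq:homfib}, and then invoke it directly, adding a homogeneity/inhomogeneity bookkeeping argument at the end. First I would recall the setup preceding the statement: for $\H\subset\K\subset\G$ with $\mathfrak m$, $\mathfrak p$ inequivalent irreducible $\Ad(\H)$-representations, Schur's Lemma forces the $A$-tensor pairings \eqref{eq:AAs} to be multiples of the metric, so \eqref{eq:Adiag} holds with some constants $\zeta,\eta$, and \cite[Prop.~9.70]{besse} then gives that $(B,\g_B)=(\G/\K,\cdot)$ and $(F,\g_F)=(\K/\H,\cdot)$ are Einstein, i.e., \eqref{eq:BFEinstein} holds. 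Hence the canonical variation $\g_t=t\,\g_\mathcal V\oplus\g_\mathcal H$ fits exactly the framework of Section~\ref{subsec:asymptbif}: it has constant scalar curvature \eqref{eq:scalcanvar} and constant $Q$-curvature \eqref{eq:Qcanvar}, and $\pi_t$ is horizontally Einstein.

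The one substantive point to check is that $\Lambda_F>0$, which is the standing hypothesis of Proposition~\ref{prop:bifurcation0}. This is where I would use the assumption that $\K$ is not Abelian: the fiber $F=\K/\H$ is a compact homogeneous space whose isotropy representation $\mathfrak p$ is irreducible (being one of the two irreducible summands), so $(F,\g_F)$ is automatically Einstein, and a compact irreducible isotropy representation forces $\Lambda_F\ge0$, with $\Lambda_F=0$ possible only if $F$ is flat, i.e., a torus — which is ruled out once $\K$ is non-Abelian (a compact homogeneous space $\K/\H$ with $\K$ non-Abelian and $\mathfrak p$ irreducible cannot be flat, since flatness would force $\mathfrak p$ to be a trivial, hence abelian, summand and $\K/\H$ a torus). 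So $\Lambda_F>0$. Combined with the dimensional hypothesis that $(n,l)=(\dim\G/\H,\dim\K/\H)$ satisfies (D1) or (D2), all the hypotheses of Proposition~\ref{prop:bifurcation0} are met, yielding a sequence $\{t_\lambda\}$ accumulating at $0$ of bifurcation instants, with the bifurcating metrics near $\g_{t_\lambda}$ having nonconstant scalar curvature for $t_\lambda$ small.

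It remains to argue that the bifurcating metrics are \emph{inhomogeneous}. Here I would use the Remark following Theorem~\ref{thm:abstrbifrisult}: the conformal factors producing the new solutions are basic, i.e., of the form $\overline v=v\circ\pi_{t_\lambda}$ for a nonconstant $v\colon \G/\K\to\R$ that solves the subcritical equation \eqref{eq:constQcurvbase} on the base. Since there are only finitely many $\G$-invariant (homogeneous) metrics on $\G/\H$ up to homothety in any fixed "two-summand" family — indeed the homogeneous metrics in the conformal class $[\g_{t_\lambda}]$ that are also of canonical-variation type form at most a one-parameter scaling family — while a bifurcating branch is an infinite sequence of pairwise nonhomothetic metrics accumulating at $\g_{t_\lambda}$, all but finitely many elements of each branch must be inhomogeneous; more directly, a $\G$-invariant conformal factor on $\G/\H$ must be constant (as $\G$ acts transitively), so any nonconstant basic conformal factor yields a metric with strictly smaller isometry group, hence inhomogeneous. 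Finally, the "nonconstant scalar curvature" clause of Proposition~\ref{prop:bifurcation0} applies to infinitely many metrics on each branch, giving the last assertion. The main obstacle is the rigorous justification that $\Lambda_F>0$ (equivalently, that the non-Abelian hypothesis on $\K$ precisely excludes the flat-fiber degeneracy $\Lambda_F=0$); everything else is a direct citation of the machinery already assembled.
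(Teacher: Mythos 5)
Your proposal matches the paper's own proof: observe that the irreducibility/inequivalence of $\mathfrak m$ and $\mathfrak p$ gives \eqref{eq:BFEinstein} and \eqref{eq:Adiag}, use compactness of the homogeneous fiber $\K/\H$ together with the non-Abelian hypothesis on $\K$ to rule out $\Lambda_F\le 0$ (flatness being the only degenerate case), and then apply Proposition~\ref{prop:bifurcation0}. The only divergence is your extra isometry-group discussion of inhomogeneity, which is not needed (and, as stated, ``smaller isometry group hence inhomogeneous'' is not quite a valid inference): the paper simply relies on the fact that the bifurcating metrics near $\g_{t_\lambda}$ have nonconstant scalar curvature, which already forbids homogeneity.
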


\begin{proof}
As observed above, \eqref{eq:BFEinstein} and \eqref{eq:Adiag} hold, since $\mathfrak m$ and $\mathfrak p$ are inequivalent irreducible $\Ad(\H)$-representations. Furthermore, as the fiber $F=\K/\H$ is a compact homogeneous space, its Einstein constant satisfies $\Lambda_F\geq0$. Equality holds if and only if $\K/\H$ is a flat manifold, which would imply $\K$ is Abelian.
Thus, $\Lambda_F>0$ and hence the entire statement follows from Proposition~\ref{prop:bifurcation0}.
\end{proof}

Triples of compact Lie groups $\H\subset\K\subset\G$ such that $\mathfrak m$ and $\mathfrak p$ are inequivalent irreducible $\Ad(\H)$-representations, as in Proposition~\ref{prop:HomExamples}, were classified by Dickinson and Kerr~\cite{kerr-dickinson}, with corrections by He~\cite{chenxu}. This classification requires the further assumptions\footnote{These assumptions are not needed for our results.} that $\G$ is simple and simply-connected or $\G=\SO(n)$, and $\H$ is connected, and produces several dozen examples including many infinite families.

\subsection{Hopf bundles}\label{subsec:hopfbundles}
A well-known and geometrically interesting subclass of homogeneous spaces $\G/\H$ as in \eqref{eq:homfib} with two isotropy summands are the \emph{Hopf bundles}:
\begin{center}
\begin{table}[ht]
\caption{The Hopf bundles and corresponding Lie groups}\label{tab:hopfbundles}
\begin{tabular}{|c|c|c|c|c|}
\hline
& $F\longrightarrow M\longrightarrow B$\rule[-1.2ex]{0pt}{0pt} \rule{0pt}{2.5ex} & $\G$  & $\K$ & $\H$ \\
\hline \noalign{\smallskip} \hline 
(i) & $\Ss^1\to \Ss^{2q+1}\to \C P^q$\rule[-1.2ex]{0pt}{0pt} \rule{0pt}{2.5ex} & $\SU(q+1)$ & $\mathsf S(\U(q)\U(1))$ & $\SU(q)$\\
(ii) & $\Ss^3\to\Ss^{4q+3}\to \Hr P^q$\rule[-1.2ex]{0pt}{0pt} \rule{0pt}{2.5ex} & $\Sp(q+1)$ & $\Sp(q)\Sp(1)$ & $\Sp(q)$\\
(iii) & $\C P^1\to \C P^{2q+1}\to \Hr P^q$\rule[-1.2ex]{0pt}{0pt} \rule{0pt}{2.5ex} & $\Sp(q+1)$ & $\Sp(q)\Sp(1)$ & $\Sp(q)\U(1)$\\
(iv) & $\Ss^7\to \Ss^{15}\to \Ss^8(1/2)$\rule[-1.2ex]{0pt}{0pt} \rule{0pt}{2.5ex} & $\Spin(9)$ & $\Spin(8)$ & $\Spin(7)$\\
\hline
\end{tabular}
\end{table}
\end{center}

It is immediate to verify that Proposition~\ref{prop:HomExamples} applies to the above Hopf bundles, provided the dimensional restrictions are satisfied; i.e., (i) is excluded, (ii) for all $q\geq1$, (iii) for all $q\geq2$, and (iv). Furthermore, Proposition~\ref{prop:bifurcationInfty} also applies to the above examples, provided that the dimensional restrictions, including \eqref{eq:etazetaineq}, are satisfied. This is the case on (i) for all $q\geq10$, on (ii) for all $q\geq3$, on (iii) for all $q\geq4$, and on (iv). Indeed, \eqref{eq:etazetaineq} can be evaluated using the following table:
\begin{center}
\begin{table}[ht]
\caption{Some invariants of the Hopf bundles, see \eqref{eq:BFEinstein} and \eqref{eq:Adiag}.}\label{tab:hopfbundles-numbers}
\begin{tabular}{|c|c|c|c|c|c|c|}
\hline
& $M$\rule[-1.2ex]{0pt}{0pt} \rule{0pt}{2.5ex} & $(n,l)$  & $\zeta$ & $\eta$ & $\Lambda_F$ & $\Lambda_B$ \\
\hline \noalign{\smallskip} \hline 
(i) & $\Ss^{2q+1}$\rule[-1.2ex]{0pt}{0pt} \rule{0pt}{2.5ex} & $(2q+1,1)$ & $1$ & $2q$ & $0$ & $2q+2$ \\
(ii) & $\Ss^{4q+3}$\rule[-1.2ex]{0pt}{0pt} \rule{0pt}{2.5ex} & $(4q+3,3)$  & $3$ & $4q$ & $2$ & $4q+8$ \\
(iii) & $\C P^{2q+1}$\rule[-1.2ex]{0pt}{0pt} \rule{0pt}{2.5ex} & $(4q+2,2)$ & $2$ & $4q$ & $4$ & $4q+8$ \\
(iv) & $\Ss^{15}$\rule[-1.2ex]{0pt}{0pt} \rule{0pt}{2.5ex} & $(15,7)$ & $7$ & $8$ & $6$ & $28$ \\
\hline
\end{tabular}
\end{table}
\end{center}

Direct computations show that the dimensional requirements on families (i), (ii), and (iii) can be further relaxed; that is, bifurcation occurs as $t\nearrow+\infty$ even in some cases where \eqref{eq:etazetaineq} is violated.\footnote{Given the nature of the estimates where \eqref{eq:etazetaineq} is used, it should not be surprising that this condition is only sufficient and not necessary.} Namely, the \emph{conclusion} of Proposition~\ref{prop:bifurcationInfty} still holds on (i) if $6\leq q\leq9$, on (ii) if $q = 2$, and on (iii) if $q = 3$, as can be verified using the computations in the Appendix~\ref{app:berger} to evaluate $\alpha_t$ and $\beta_t$, proceeding as in the proof of Proposition~\ref{prop:bifurcationInfty}. Note that, in these cases, bifurcating branches issuing from $\g_t$, with $t$ sufficiently large, provide examples of nonuniqueness of constant $Q$-curvature metrics in conformal classes $[\g_t]$ with $Y(M,\g_t)<0$ and $Y_4^+(M,\g_t)<0$.

\section{Nonuniqueness on noncompact manifolds via coverings}
\label{sec:noncompact}

The problem of finding metrics with constant $Q$-curvature in a given conformal class can also be studied on noncompact Riemannian manifolds. In this situation, the geometrically natural boundary condition to impose is \emph{completeness} of the metric $\g=u^\frac{4}{n-4}\,\g_0$, which translates to an appropriate growth rate for the conformal factor $u$. Although the constant $Q$-curvature equation remains \eqref{eq:constQcurvequation}, variational formulations such as \eqref{eq:deftotQcurvfunct} are no longer available.
To circumvent this issue and establish nonuniqueness results also in this context, we combine infinite towers of coverings and the reverse Aubin inequality \eqref{eq:AubinTheta4}, in an argument inspired by nonuniqueness results for the Yamabe problem on noncompact manifolds~\cite{frankenstein}.

\begin{proposition}\label{prop:coverings}
Let $(M_0,\g_0)$ be a closed Riemannian manifold of dimension $n\ge5$, and let $(M_\infty,\g_\infty)\to (M_0,\g_0)$ be a Riemannian covering whose group of deck transformations has infinite profinite completion. Suppose $Q_{\g_0}$ is almost positive and $Y(M_0,\g_0)\geq 0$. Then, the conformal class $[\g_\infty]$ contains infinitely many pairwise nonhomothetic complete metrics with constant positive $Q$-curvature.
\end{proposition}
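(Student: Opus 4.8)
The plan is to build the infinitely many metrics by climbing an infinite tower of finite-sheeted coverings
\[
(M_0,\g_0)\longleftarrow (M_1,\g_1)\longleftarrow (M_2,\g_2)\longleftarrow\cdots\longleftarrow(M_\infty,\g_\infty),
\]
where each $(M_j,\g_j)$ is a finite cover of $(M_0,\g_0)$ sitting inside $(M_\infty,\g_\infty)$, and each pulled-back metric $\g_j$ still has $Y(M_j,\g_j)\geq0$ and $Q_{\g_j}$ almost positive (these properties are local, hence inherited by coverings). Such a tower with $\operatorname{Vol}(M_j,\g_j)=d_j\operatorname{Vol}(M_0,\g_0)$ and $d_j\to\infty$ exists precisely because the deck group of $M_\infty\to M_0$ has infinite profinite completion: it has finite-index subgroups of arbitrarily large index, which one intersects to get a nested cofinal sequence. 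On each $M_j$, Proposition~\ref{prop:hangyang} (the Hang--Yang existence result) produces a conformal metric $\widetilde\g_j\in[\g_j]$ with constant positive $Q$-curvature, realizing the supremum that defines $\Theta_4(M_j,\g_j)$.

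The engine is a comparison between two estimates on $\Theta_4(M_j,\g_j)$. On one hand, the \emph{reverse Aubin inequality} \eqref{eq:AubinTheta4} gives the uniform lower bound $\Theta_4(M_j,\g_j)\geq\Theta_4(\Ss^n,\gr)>0$, with equality only when $M_j$ is conformally the round sphere — which cannot happen for large $j$ since the $M_j$'s are nontrivial covers. On the other hand, I would show $\Theta_4$ is \emph{small} on highly-covered manifolds: using the characterization \eqref{eq:newTheta4}, one estimates
\[
\Theta_4(M_j,\g_j)=\tfrac2{n-4}\sup_{\g\in[\g_j]}\frac{\int_{M_j}Q_\g\,\vol_\g}{\Vert Q_\g\Vert^2_{L^{2n/(n+4)}(M_j,\vol_\g)}}
\]
and, by Hölder's inequality applied to $\int Q_\g\vol_\g=\int Q_\g\cdot 1\,\vol_\g$, the ratio is bounded above by $\operatorname{Vol}(M_j,\g_j)^{(n-4)/n}$, hence by $d_j^{(n-4)/n}\operatorname{Vol}(M_0,\g_0)^{(n-4)/n}$ — wait, that grows; the correct direction is that the \emph{scale-invariant} total $Q$-curvature of the Hang--Yang metric $\widetilde\g_j$, namely $\mathcal Q(\widetilde\g_j)=\operatorname{Vol}(M_j,\widetilde\g_j)^{(4-n)/n}\int Q_{\widetilde\g_j}\vol$, is controlled: from \eqref{eq:defY4+} and \eqref{eq:defTheta4}, $\tfrac{n-4}2\mathcal Q(\widetilde\g_j)=Y_4^+(M_j,\g_j)$ and the key point is that $Y_4^+$ of a cover is bounded above by $Y_4^+(M_0,\g_0)$ (since horizontal-lift conformal factors are a subspace over which the infimum defining $Y_4^+$ can be taken, and $\mathcal Q$ is covering-invariant on pulled-back metrics). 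Combining, the constant $Q$-curvature value and the volume of $\widetilde\g_j$ are coupled so that either the volume blows up or the $Q$-curvature degenerates, and a Bishop--Gromov / volume-growth argument along the tower forces the $\widetilde\g_j$ (or enough of them, after passing to a subsequence) to be pairwise nonhomothetic — the homothety class of a constant-$Q$-curvature metric is pinned down by the pair (volume, $Q$-value), and these pairs take infinitely many values.

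Concretely I would: (1) fix the tower $\{(M_j,\g_j)\}$ with $d_j\to\infty$; (2) observe $Y(M_j,\g_j)\geq0$ and $Q_{\g_j}$ almost positive are preserved under covering, so Proposition~\ref{prop:hangyang} applies to each and yields $\widetilde\g_j\in[\g_j]$ with constant $Q$-curvature $\mathcal Q_j>0$; (3) prove $\Theta_4(M_j,\g_j)\to\infty$ (equivalently the scale-invariant total $Q$-curvature $\to 0$) by playing the volume growth $d_j\to\infty$ against the covering-monotonicity of the relevant invariant — this is the step that mirrors ``the volume estimate side'' of \cite{frankenstein}; (4) invoke \eqref{eq:AubinTheta4} to get the uniform lower bound $\Theta_4(M_j,\g_j)\geq\Theta_4(\Ss^n,\gr)$ and note equality is impossible, so the $\Theta_4(M_j,\g_j)$ strictly increase along a subsequence and take infinitely many distinct values; (5) conclude that the corresponding $\widetilde\g_j$ descend/embed into $[\g_\infty]$ as infinitely many pairwise nonhomothetic complete (because periodic over a compact quotient) metrics of constant positive $Q$-curvature. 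The main obstacle is step (3): establishing that $\Theta_4$, or the scale-invariant total $Q$-curvature of the extremal Hang--Yang metric, genuinely degenerates along the tower, which requires pairing the reverse-Aubin normalization with the volume growth carefully — the analogue of the Aubin-inequality-versus-volume-growth tension in the Yamabe case, but here with the fourth-order invariant $\Theta_4$ in place of the Yamabe constant, and with the subtlety that the extremals need not be positive a priori (handled by Hang--Yang's sign result).
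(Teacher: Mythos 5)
Your overall skeleton (tower of finite covers with degree $\to\infty$, Hang--Yang extremals on each cover, reverse Aubin inequality \eqref{eq:AubinTheta4} played against volume growth, pullback to $M_\infty$) is the paper's skeleton, but the engine of your argument---step (3)---rests on claims that are not correct, so as written there is a genuine gap. First, the identity $\tfrac{n-4}{2}\mathcal Q(\widetilde\g_j)=Y_4^+(M_j,\g_j)$ is unjustified: the Hang--Yang metric maximizes the functional in \eqref{eq:defTheta4}--\eqref{eq:newTheta4}, and nothing says it minimizes $\mathcal Q$ as in \eqref{eq:defY4+}. Second, the claimed covering monotonicity ``$Y_4^+$ of a cover is bounded above by $Y_4^+(M_0,\g_0)$ because $\mathcal Q$ is covering-invariant on pulled-back metrics'' is false: because of the volume normalization in \eqref{eq:deftotQcurvfunct}, pulling back a fixed conformal metric to a $d$-sheeted cover multiplies $\mathcal Q$ by $d^{4/n}$, so the pullback comparison goes in the opposite direction (exactly as in the Yamabe case, where the relevant upper bound is the \emph{sphere's} constant, not the base's). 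What actually makes the argument work---and what the paper does---is more elementary and avoids $Y_4^+$ entirely: fix one Hang--Yang extremal $\widetilde\g_1$ on $M_1$, with constant $Q$-curvature $q_1>0$, pull it back to $\h_k$ on $M_k$; since $Q_{\h_k}\equiv q_1$ is constant, the Hang--Yang ratio evaluates explicitly to $q_1^{-1}\Vol(M_k,\h_k)^{-4/n}$, which tends to $0$ as the covering degree grows. Combined with the uniform lower bound $\Theta_4(M_k,\g_k)\ge\Theta_4(\Ss^n,\gr)>0$ from \eqref{eq:AubinTheta4}, this shows $\h_{k_0}$ is eventually \emph{not} a maximizer; the maximizer furnished by Proposition~\ref{prop:hangyang} on $M_{k_0}$ is then nonhomothetic to $\h_{k_0}$ simply because the ratio is homothety-invariant, and one iterates. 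By contrast, your step (4) (``the $\Theta_4(M_j,\g_j)$ strictly increase along a subsequence'') does not follow from a lower bound, and your nonhomothety criterion ``the homothety class of a constant-$Q$-curvature metric is pinned down by (volume, $Q$-value)'' is false in general---indeed it is essentially the uniqueness statement the proposition is refuting; the Bishop--Gromov remark does not repair this.

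A second, smaller but real, gap: Proposition~\ref{prop:hangyang} requires $Y(M_j,\g_j)>0$ strictly, while the hypothesis only gives $Y(M_0,\g_0)\ge0$, and the Yamabe invariant is not a local quantity, so ``inherited by coverings because local'' is not a valid justification even for $Y\ge0$. You need the observation of Aubin (in the form of Akutagawa--Neves) that a nontrivial finite cover of a closed manifold with $Y\ge0$ has $Y>0$; this is precisely why the paper works on the covers $M_k$, $k\ge1$, rather than on $M_0$ itself. (Almost-positivity of $Q_{\g_k}$, by contrast, is genuinely local and your claim there is fine, as is the final remark that pullbacks to $M_\infty$ are complete because they are periodic.)
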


\begin{proof}
It follows from the assumption on the group of deck transformations that there exists an infinite tower of Riemannian coverings
\begin{equation*}
(M_\infty,\g_\infty)\longrightarrow \cdots\longrightarrow (M_k,\g_k)\longrightarrow\cdots\longrightarrow (M_2,\g_2)\longrightarrow (M_1,\g_1)\longrightarrow (M_0,\g_0),
\end{equation*}
where $M_k\to M_0$ is an $\ell_k$-sheeted covering, with $\ell_k\geq 2$ and $\ell_k\nearrow\infty$ as $k\to\infty$, see \cite[Lemma~3.6]{frankenstein}.
Since $(M_k,\g_k)$ are locally isometric to $(M_0,\g_0)$, also $Q_{\g_k}$ is almost positive. Using an observation of Aubin, see Akutagawa and Neves~\cite[Lemma~3.6]{akutagawa-neves}, we have that $Y(M_k,\g_k)>0$ for all $k\geq1$.
Thus, by Proposition~\ref{prop:hangyang},
\[\overline\Theta(M_k,\g)=\frac{\int_{M_k}Q_{\g}\,\vol_\g}{\Vert Q_\g\Vert^2_{L^{\frac{2n}{n+4}}(M_k,\vol_\g)}}\]
attains its maximum in each conformal class $[\g_k]$ at some metric $\widetilde \g_k\in[\g_k]$ for all $k\geq1$.
Denote by $\h_k$ the pullback to $M_k$ of the metric $\widetilde \g_1$, and note that $\h_k\in[\g_k]$.
Since $q_1=Q_{\widetilde\g_1}$ is (a positive) constant and $\h_k$ is locally isometric to $\widetilde\g_1$, then $Q_{\h_k}=q_1$ for all $k\geq1$, and we have:
\begin{align*}
\overline\Theta(M_k,\h_k)&=\frac{\int_{M_k}Q_{\h_k}\,\vol_{\h_k}}{\left(\int_{M_k}Q_{\h_k}^{\frac{2n}{n+4}}\,\vol_{\h_k}\right)^\frac{n+4}n}\\
&=q_1^{-1}\Vol(M_1,\widetilde\g_1)^{-\frac{4}n}\,\left(\frac{\ell_k}{\ell_1}\right)^{-\frac{4}{n}}\searrow0,\ \text{as $k\to\infty$}.
\end{align*}
Thus, $\frac2{n-4}\overline\Theta(M_k,\h_k)<\Theta_4(\Ss^n,\gr)$ if $k\geq k_0$ for some $k_0\geq1$ sufficiently large, and hence $\h_{k_0}$ is not a maximizer of $\Theta_4$ in $[\g_{k_0}]$. By Proposition~\ref{prop:hangyang}, there exists a maximizer of $\Theta_4$ in $[\h_{k_0}]$, not homothetic to $\h_{k_0}$. The pullbacks to $M_\infty$ of this maximizer and $\h_{k_0}$ are two nonhomothetic complete metrics with constant positive $Q$-curvature in $[\g_\infty]$. Infinitely many such metrics can be produced by iterating this procedure, replacing $(M_1,\g_1)$ with $(M_{k_0},\g_{k_0})$.
\end{proof}


We are now ready to prove Theorem~\ref{mainthm:B} in the Introduction, using Proposition~\ref{prop:coverings}.

\begin{proof}[Proof of Theorem~\ref{mainthm:B}]
By classical result of Borel~\cite{borel}, every symmetric space $N$ of noncompact type admits irreducible compact quotients $\Sigma=N/\Gamma$, and the same is true for the Euclidean space $\R^d$. Let us fix such a compact quotient $(\Sigma,\h_\Sigma)$ with the induced locally symmetric metric. Since $\pi_1(\Sigma)$ is infinite and residually finite, it has infinite profinite completion.
The conclusion follows from Proposition~\ref{prop:coverings} with $(M_\infty,\g_\infty)=(C\times N,\g\oplus\h)$,
and $(M_0,\g_0)=(C\times\Sigma,\g\oplus\h_\Sigma)$, which clearly has nonnegative Yamabe invariant and almost positive $Q$-curvature.
\end{proof}




\appendix
\section{\texorpdfstring{The $Q$-curvature of Berger metrics}{The Q-curvature of Berger metrics}}\label{app:berger}

For the convenience of the reader, we now provide explicit formulae for the $Q$-curvature of the canonical variation $\g_t=t\,\g_{\mathcal V}+\g_{\mathcal H}$, $t>0$, for each of the Hopf bundles listed in Table~\ref{tab:hopfbundles}. These metrics are commonly referred to as \emph{Berger metrics}. Further details about such metrics can be found in \cite[\S 9.81-\S 9.85]{besse} and \cite{bp-calcvar}.

We begin by recalling that, since $(\Ss^n,\gr)$ has Ricci tensor $\Ric=(n-1)\id$, its scalar curvature is $\scal=n(n-1)$ and its $Q$-curvature is $Q=\tfrac{1}{8}n(n^2-4)$.

\subsection{\texorpdfstring{Berger spheres $(\Ss^{2q+1},\g_t)$}{Berger spheres with 1-dimensional fiber}}
Consider the Hopf bundle (i) in Table~\ref{tab:hopfbundles} and the canonical variation $\g_t$, where $(\Ss^{2q+1},\g_1)$ is the unit round sphere. The Ricci tensor of $(\Ss^{2q+1},\g_t)$ has eigenvalues
\begin{equation*}
\begin{aligned}
&2qt& &\text{ with multiplicity } 1,\\
&2q+2-2t& &\text{ with multiplicity } 2q,
\end{aligned}
\end{equation*}
hence the Riemannian submersion $\pi_t\colon (\Ss^{2q+1},\g_t)\to\C P^q$ is horizontally Einstein, with $\kappa_t=2q+2-2t$, and furthermore one can explicitly compute:
\begin{align*}
\big\|\Ric_{\g_t}\big\|^2&=(2qt)^2+2q(2q+2-2t)^2,\\
\scal_{\g_t}&=2q(2q+2-t),
\end{align*}
\begin{align*}
Q_{\g_t}&=-\tfrac{2}{(2q-1)^2}\big\|\Ric_{\g_t}\big\|^2+\tfrac{(2q+1)^3-4(2q+1)^2+16(2q+1)-16}{8(2q)^2(2q-1)^2}\scal_{\g_t}^2\\
&=\tfrac{8q^3-68q^2-106q-3}{8(2q-1)^2}\, t^2-\tfrac{8q^4+4q^3-46q^2-45q-3}{2(2q-1)^2}\, t+\tfrac{(2q^2+3q+1)^2(2q-3)}{2(2q-1)^2}.
\end{align*}
A simple analysis of the above coefficients shows the following asymptotic behavior:
\begin{equation*}
\lim_{t\searrow0} Q_{\g_t}<+\infty, \text{ for all } q\geq1, \quad \text{ and }\quad \lim_{t\nearrow+\infty} Q_{\g_t}=\begin{cases}
-\infty, & \text{ for all } q\leq 9\\
+\infty, & \text{ for all } q\geq 10\\
\end{cases}.
\end{equation*}

\subsection{\texorpdfstring{Berger spheres $(\Ss^{4q+3},\g_t)$}{Berger spheres with 3-dimensional fiber}}
Consider the Hopf bundle (ii) in Table~\ref{tab:hopfbundles} and the canonical variation $\g_t$, where $(\Ss^{4q+3},\g_1)$ is the unit round sphere. The Ricci tensor of $(\Ss^{4q+3},\g_t)$ has eigenvalues
\begin{equation*}
\begin{aligned}
&\tfrac2t+4qt& &\text{ with multiplicity } 3,\\
&4q+8-6t& &\text{ with multiplicity } 4q,
\end{aligned}
\end{equation*}
and hence the Riemannian submersion $\pi_t\colon (\Ss^{4q+3},\g_t)\to\Hr P^q$ is horizontally Einstein Riemannian submersion with $\kappa_t=4q+8-6t$, and one can explicitly compute:
\begin{equation*}
\begin{aligned}
\big\|\Ric_{\g_t}\big\|^2&=12(\tfrac1t+2qt)^2+16q(2q+4-3t)^2,\\
\scal_{\g_t}&=2\left(\tfrac3t+8q(q+2)-6qt\right),\\
\end{aligned}
\end{equation*}
\begin{equation*}
\begin{aligned}
Q_{\g_t}&=-\tfrac{2}{(4q+1)^2}\big\|\Ric_{\g_t}\big\|^2+\tfrac{(4q+3)^3-4(4q+3)^2+16(4q+3)-16}{8(4q+2)^2(4q+1)^2}\scal_{\g_t}^2\\
&=\tfrac{3(4q-1)^2(12q+5)}{8(4q+1)^2(2q+1)^2}\,\tfrac{1}{t^2}+\tfrac{(64q^3+80q^2+76q+23)(6q^2+12q)}{(4q+1)^2(2q+1)^2}\,\tfrac{1}{t}\\
&\quad+\tfrac{1024q^7+5376q^6+9408q^5+4656q^4-3600q^3-5100q^2-1423q}{2(4q+1)^2(2q+1)^2}\\
&\quad-\tfrac{(64q^4+80q^3-52q^2-105q-32)(12q^2+24q)}{(4q+1)^2(2q+1)^2}\, t +\tfrac{(48q^3-40q^2-169q-64)(12q^2+9q)}{2(4q+1)^2(2q+1)^2}\, t^2.
\end{aligned}
\end{equation*}
Again, a simple analysis of coefficients shows that:
\begin{equation*}
\lim_{t\searrow0} Q_{\g_t}=+\infty, \text{ for all } q\geq1, \quad \text{ and }\quad \lim_{t\nearrow+\infty} Q_{\g_t}=\begin{cases}
-\infty, & \text{ for all } q\leq 2\\
+\infty, & \text{ for all } q\geq 3\\
\end{cases}.
\end{equation*}

\subsection{\texorpdfstring{Berger metrics $(\C P^{2q+1},\g_t)$}{Berger metrics on complex projective spaces}}
Consider the Hopf bundle (iii) in Table~\ref{tab:hopfbundles} and the canonical variation $\g_t$, where $(\C P^{2q+1},\g_1)$ is the Fubini-Study metric. The Ricci tensor of $(\C P^{2q+1},\g_t)$ has eigenvalues
\begin{equation*}
\begin{aligned}
&\tfrac{4}{t}+4qt& &\text{ with multiplicity } 2,\\
&4q+8-4t& &\text{ with multiplicity } 4q
\end{aligned}
\end{equation*}
and hence the Riemannian submersion $\pi_t\colon (\C P^{2q+1},\g_t)\to\Hr P^q$ is horizontally Einstein Riemannian submersion with $\kappa_t=4q+8-4t$, and one can explicitly compute:
\begin{equation*}
\begin{aligned}
\big\|\Ric_{\g_t}\big\|^2&=\tfrac{32}{t^2}+64 q \left(q^2+4q+5\right)-128 (q^2+2q) t +32 q (q+2) t^2,\\
\scal_{\g_t}&=2(\tfrac{4}{t}+4qt)+4q(4q+8-4t),
\end{aligned}
\end{equation*}
\begin{align*}
Q_{\g_t}&=\tfrac{8 \left(4 q^2-6 q-1\right)}{q (4 q+1)^2 }\tfrac{1}{t^2}+\tfrac{16 \left(8 q^4+20 q^3+14 q^2+13 q+2\right)}{q (4 q+1)^2 }\tfrac{1}{t}\\
&\quad+\tfrac{8 \left(16 q^6+72 q^5+92 q^4+2 q^3-61 q^2-42 q-6\right)}{q (4 q+1)^2}\\
&\quad+16\left(1-\tfrac{8 q^4+20 q^3+14 q^2+13 q+2}{(4 q+1)^2}+\tfrac{2}{q}\right) t-4\left(1-\tfrac{8 q^3+4 q^2+6 q+1}{(4 q+1)^2}+\tfrac{2}{q}\right) t^2.
\end{align*}
As before, a simple analysis of coefficients shows that:
\begin{equation*}
\lim_{t\searrow0} Q_{\g_t}=\begin{cases}
-\infty, & \text{ if } q=1\\
+\infty, & \text{ for all } q\geq 2\\
\end{cases}, \quad \text{ and }\quad \lim_{t\nearrow+\infty} Q_{\g_t}=\begin{cases}
-\infty, & \text{ for all } q\leq 3\\
+\infty, & \text{ for all } q\geq 4\\
\end{cases}.
\end{equation*}

\subsection{\texorpdfstring{Berger spheres $(\Ss^{15},\g_t)$}{Berger spheres with 7-dimensional fiber}}
Consider the Hopf bundle (iv) in Table~\ref{tab:hopfbundles} and the canonical variation $\g_t$, where $(\Ss^{15},\g_1)$ is the unit round sphere. The Ricci tensor of $(\Ss^{15},\g_t)$ has eigenvalues
\begin{equation*}
\begin{aligned}
&\tfrac{6}{t}+8t& &\text{ with multiplicity } 7,\\
&28-14t& &\text{ with multiplicity } 8,
\end{aligned}
\end{equation*}
and hence the Riemannian submersion $\pi_t\colon (\Ss^{15},\g_t)\to\Ss^8(1/2)$ is horizontally Einstein Riemannian submersion with $\kappa_t=28-14t$, and one can explicitly compute:
\begin{equation*}
\begin{aligned}
\big\|\Ric_{\g_t}\big\|^2&=\tfrac{252}{t^2} + 6944 - 6272 t + 2016 t^2,\\
\scal_{\g_t}&=\tfrac{42}{t}-56t+224,\\
Q_{\g_t}&=\tfrac{20259}{1352}\tfrac{1}{t^2}+\tfrac{32388}{169}\tfrac{1}{t}+\tfrac{64383}{169}-\tfrac{30640}{169}t+\tfrac{1366}{169}t^2.
\end{aligned}
\end{equation*}
Note that
\begin{equation*}
\lim_{t\searrow0} Q_{\g_t}=+\infty, \quad \text{ and }\quad \lim_{t\nearrow+\infty} Q_{\g_t}=+\infty.
\end{equation*}

\end{document}